\numberwithin{equation}{section}
\theoremstyle{plain}
\newtheorem{theorem}{Theorem}[section]
\newtheorem{corollary}[theorem]{Corollary}
\newtheorem{lemma}[theorem]{Lemma}
\newtheorem{proposition}[theorem]{Proposition}
\theoremstyle{definition}
\newtheorem{definition}[theorem]{Definition}
\theoremstyle{remark}
\newtheorem{remark}[theorem]{Remark}
\newcommand{\Q}{\mathbb{Q}}
\newcommand{\N}{\mathbb{N}}
\newcommand{\Z}{\mathbb{Z}}
\renewcommand{\O}{\mathcal{O}}
\renewcommand{\a}{\mathfrak{a}}
\renewcommand{\b}{\mathfrak{b}}
\renewcommand{\c}{\mathfrak{c}}
\renewcommand{\d}{\mathfrak{d}}
\newcommand{\p}{\mathfrak{p}}
\newcommand{\ep}{\varepsilon}
\newcommand{\set}[1]{\left\{ #1 \right\}}
\newcommand{\setm}[1]{\setminus\set{ #1 }}
\newcommand{\br}[1]{\left( #1 \right)}
\newcommand{\se}{\subset}
\newcommand{\defC}{:\;}
\newcommand{\OK}{\O_K}
\newcommand{\OKp}{\O_{K,p}}
\newcommand{\IK}{\mathcal{I}_K}
\DeclareMathOperator{\tr}{tr}
\DeclareMathOperator{\sgn}{sgn}
\newcommand{\und}{\quad\text{and}\quad} 
\newcommand{\kron}[2]{\br{\frac{#1}{#2}}}
\renewcommand{\Re}{\operatorname{Re}}
\begin{document}

\author{Johannes J. Buck}

\address{Fachbereich Mathematik, Technische Universit\"at Darmstadt, Schlossgartenstrasse 7,
D--64289 Darmstadt, Germany}
\email{jbuck@mathematik.tu-darmstadt.de}

\thanks{The author was supported by the DFG Collaborative Research Centre TRR 326 Geometry and Arithmetic of Uniformized Structures, project number 444845124.}

\title[Dirichlet series ass. to rep. numbers of ideals in real quadratic fields]{Dirichlet series associated to representation numbers of ideals in real quadratic number fields}

\date{\today}

\begin{abstract}
In this rather computational paper, we determine certain representation numbers of ideals in real quadratic number fields explicitly in order to obtain a representation of the associated Dirichlet series in terms of Dirichlet L-functions and a generalized divisor sum. A direct and important consequence is that the Dirichlet series has a meromorphic continuation to the whole complex plane and a simple pole at $s=2$ whose residue can be made explicit in terms of the Dirichlet L-functions and the generalized divisor sum.
\end{abstract}

\maketitle

\tableofcontents


\section{Introduction}

Throughout the paper, $K$ is a real quadratic number field of odd discriminant $D$. The discriminant induces the Dirichlet character $\chi_D$ and
its associated L-function $L(s,\chi_D)$ defined by
\[
\chi_D(n) := \kron{D}{n}
\und
L(s,\chi_D): = \sum_{n=1}^\infty \chi_D(n)n^{-s} = \prod_{p} (1-\chi_D(p)p^{-s})^{-1} \text{ for } \Re(s)>1.
\]
With $x \mapsto x'$ we denote the conjugation in $K$, with $N(x):=xx'$ and $\tr(x):=x+x'$ the norm and the trace.
The ring of integers of $K$ is given by $\OK = \Z + \tfrac{1+\sqrt{D}}{2} \Z$.
By $\IK$ we denote the ideal group of $K$ and by $\d \in \IK$ the \emph{different} $\d=(\sqrt{D}) = \sqrt{D}\OK$. Each ideal $\a \in \IK$ has a unique product representation
\[
\a = \prod_{ \substack{\p \se \OK \\ \p \text{ prime}}   } \p^{\nu_p(\a)}
\]
with $\nu_p(\a) \in \Z$.
We call ideals $\a, \b \in \IK$ \emph{coprime} if we have $\nu_\p(\a)\nu_\p(\b) = 0$ for all prime ideals $\p \se \OK$. If $\a$ or $\b$ happen to be elements of $K$ instead of ideals in $\IK$, we tacitly replace them by their principal ideals for the definition of being coprime.

The finite exponential sum $G^b(\a,m,\nu)$ is defined by
\[
G^b(\a,m,\nu) := \sum_{ \substack{ \lambda \in \a\d^{-1} / b\a \\ \frac{N(\lambda)}{N(\a)} \equiv -\frac{m}{D}\: (b\Z) }  } e \br{\tr  \br{\tfrac{\nu\lambda'}{N(\a)b} } }
\]
for $b \in \N$, $\a \in \IK$, $m \in \Z$ and $\nu \in \a\d^{-1}$.
In this paper however, we are only interested in the case $\nu=0$ where $G^b(\a,m,0)$ equals the representation number
\begin{align*}
G^b(\a,m,0)= \# \set{ \lambda \in \a\d^{-1}/b \a \defC \frac{N(\lambda)}{N(\a)} \equiv  -\frac{m}{D}  \pmod b}
\end{align*}
in order to examine the Dirichlet series
\begin{align} \label{Gseries} 
\sum_{b=1}^\infty G^b(\a,m,0)b^{-s}.
\end{align}
We make the representation numbers $G^b(\a,m,0)$ explicit which allows us to prove in Theorem~\ref{Gb-series} the Dirichlet series representation
\begin{align} \label{Gseries-mne0} 
\sum_{b=1}^\infty G^b(\a,m,0)b^{-s}
=|m|^{-s/2}  \frac{\zeta(s-1)}{L(s,\chi_D)} \sigma(\a,m,1-s)
\end{align}
for $m \ne 0$ where $\sigma(\a,m,s)$ is a certain generalized divisor sum only depending on the genus of $\a$ (cf.~Definition~\ref{div-sum-def}). Similarly, for $m=0$ we prove in Corollary~\ref{G-series-m0}
\begin{align} \label{Gseries-m0} 
\sum_{b=1}^\infty G^b(\a,0,0)b^{-s} = \zeta(s-1) \frac{L(s-1,\chi_D)}{L(s,\chi_D)}.
\end{align}
These representations show that the Dirichlet series~\eqref{Gseries} has a meromorphic continuation to the whole complex plane. At $s=2$ it has a simple pole coming from the zeta function $\zeta(s-1)$ which plays a big role in \cite{buckdiss} in the regularization of automorphic Green functions. 
Theorem~\ref{Gb-series} is a generalization of \cite[Lemma 2.10]{bruinier2007borcherds} which states identity~\eqref{Gseries-mne0} in the special case of $\a = \OK$ and $D$ being a prime number.

Along the way of making the representation numbers $G^b(\a,m,0)$ explicit
we give precise formulae for the related representation numbers
\[
N_b(\a,m) := \# R_b(\a,m)
\quad\text{with}\quad
R_b(\a,m) := \set{\lambda \in \a/b\a \defC  \frac{N(\lambda)}{N(\a)} \equiv m \pmod b}
\]
in Proposition~\ref{rep-numbers}. This generalizes \cite[Lemma 3]{zagier1975modular} where precise formulae for those representation numbers are stated in the special case $\a = \OK$. However, a proof for Zagiers Lemma is omitted in \cite{zagier1975modular}, here we provide it.


\section{Reduction to representation numbers \texorpdfstring{$N_b(\a,m)$}{N\_b(a,m)}}

In this small section we accomplish to express the representation numbers $G^b(\a,m,0)$ in terms of the representation numbers $N_b(\a,m)$.
Namely, we have
\begin{align*}
G^b(\a,m,0)
&= \# \set{ \lambda \in \a\d^{-1}/b \a \defC \frac{N(\lambda)}{N(\a)} \equiv  -\frac{m}{D}  \pmod b}\\
&= \# \set{ \lambda \in \a/b\a\d \defC \frac{N(\lambda)}{N(\a)} \equiv m \pmod{bD}}\\
&= D^{-1} \# \set{ \lambda \in \a/bD\a \defC \frac{N(\lambda)}{N(\a)} \equiv m \pmod{bD}} = \frac{N_{bD}(\a,m)}{D}.
\end{align*}
However, the representation numbers $N_b(\a,m)$ are multiplicative in $b$ by the Chinese remainder theorem. Therefore, we obtain for our Dirichlet series the Euler product representation
\begin{align} \label{first-euler} 
\begin{split}
\sum_{b=1}^\infty G^b(\a,m,0)b^{-s}
&= \frac{1}{D} \sum_{b=1}^\infty N_{bD}(\a,m) b^{-s}\\
&= \prod_{p \mid D} \br{ \frac{1}{p} \sum_{r=0}^\infty N_{p^{r+1}}(\a,m) p^{-rs} } \prod_{p \nmid D} \br{ \sum_{r=0}^\infty N_{p^r}(\a,m) p^{-rs} }.
\end{split}
\end{align}
Note that since $D$ is odd each prime divisor $p \mid D$ divides $D$ with multiplicity $1$.

\section{Dependence on the genus only}
In this section we show that the representation numbers $N_b(\a,m)$ depend only on the genus of $\a \in \IK$ and not the ideal $\a$ itself. By our considerations of the last section this also applies to $G^b(\a,m,0)$ and the Dirichlet series~\eqref{Gseries} then.

\begin{lemma} \label{b-square-invariance} 
Let $b \in \N$ and $a \in \Z$ be such that $a$ is a square in $(\Z/b\Z)^\times$. Then for all $\a \in \IK$ and $n \in \Z$ it holds
\[
N_b(\a,n) = N_b(\a,a n).
\]
\end{lemma}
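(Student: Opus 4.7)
The plan is to exhibit an explicit bijection between the sets $R_b(\a, n)$ and $R_b(\a, an)$. Since $a$ is a square in $(\Z/b\Z)^\times$, pick a representative $c \in \Z$ with $c^2 \equiv a \pmod{b}$. Because $c^2$ is a unit modulo $b$, so is $c$; in particular $\gcd(c,b) = 1$, and there exists some $c' \in \Z$ with $cc' \equiv 1 \pmod{b}$.

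Next I would consider the multiplication map
\[
\varphi \colon \a/b\a \longrightarrow \a/b\a, \qquad \lambda + b\a \longmapsto c\lambda + b\a.
\]
This is well-defined because $c \in \Z$ sends $b\a$ into $b\a$, and it is bijective with inverse $\lambda \mapsto c'\lambda$: indeed $cc'\lambda - \lambda = (cc' - 1)\lambda \in b\a$ since $cc' - 1 \in b\Z$ and $\lambda \in \a$.

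Now I would check that $\varphi$ restricts to a bijection $R_b(\a, n) \to R_b(\a, an)$. For $\lambda \in \a$ we compute
\[
\frac{N(c\lambda)}{N(\a)} = c^2 \cdot \frac{N(\lambda)}{N(\a)} \equiv a \cdot \frac{N(\lambda)}{N(\a)} \pmod{b},
\]
so $\lambda \in R_b(\a, n)$ iff $c\lambda \in R_b(\a, an)$. The inverse $\lambda \mapsto c'\lambda$ likewise maps $R_b(\a, an)$ into $R_b(\a, n)$ since $(c')^2 a \equiv 1 \pmod b$. Taking cardinalities yields $N_b(\a, n) = N_b(\a, an)$.

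There is no real obstacle in this argument; the only subtlety worth highlighting is that the map is defined by multiplication by a rational integer $c$ rather than an element of $\OK$, which is what keeps $\a/b\a$ stable without any further hypothesis on $\a$ and what makes the norm scale by exactly the scalar $c^2 \in \Z$.
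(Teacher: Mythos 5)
Your proposal is correct and follows essentially the same route as the paper: multiplication by an integer $c$ with $c^2 \equiv a \pmod b$ gives the bijection $R_b(\a,n) \to R_b(\a,an)$, inverted by multiplying by an inverse of $c$ modulo $b$. You spell out the well-definedness of the map on $\a/b\a$ a bit more explicitly than the paper does, but the argument is the same.
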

\begin{proof}
The statement follows from the fact that for $c \in \Z$ with $c^2 \equiv a \pmod b$ the map
\[
R_b(\a,n) \to R_b(\a,a n),\quad x \mapsto cx
\]
is bijective:
Let $x \in R_b(\a,n)$. Then we have
\[
\frac{N(cx)}{N(\mathfrak a)} = c^2 \frac{N(x)}{N(\mathfrak a)} \equiv c^2 n \equiv an \pmod b.
\]
The inverse mapping is obtained by inverting $c$ modulo $b$.
\end{proof}

\begin{lemma} \label{one-implies-n} 
Let $b \in \N$ and $\a,\b \in \IK$ with $R_b(\b,1) \ne \emptyset$. Then we have for all $n \in \Z$
\[
N_b(\a,n)=N_b(\a\b,n).
\]
\end{lemma}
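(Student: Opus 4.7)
The plan is to construct an explicit bijection from $R_b(\a,n)$ to $R_b(\a\b,n)$ via multiplication by a well-chosen element of $\b$. First I would use the hypothesis $R_b(\b,1) \ne \emptyset$ to fix some $\mu \in \b$ with $N(\mu)/N(\b) \equiv 1 \pmod b$, and write the principal ideal as $(\mu) = \b\c$ with $\c \subseteq \OK$ integral. Then $N(\c) = N(\mu)/N(\b) \equiv 1 \pmod b$, so $\c$ is coprime to $b\OK$. Since $\mu \in \b$, multiplication by $\mu$ sends $\a$ into $\b\a = \a\b$ and $b\a$ into $b\a\b$, so it induces a $\Z/b\Z$-linear map $\phi\colon \a/b\a \to \a\b/b\a\b$.

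Multiplicativity of the norm then yields
\[
\frac{N(\mu\lambda)}{N(\a\b)} = \frac{N(\mu)}{N(\b)} \cdot \frac{N(\lambda)}{N(\a)} \equiv \frac{N(\lambda)}{N(\a)} \pmod b,
\]
and because $N(\mu)/N(\b)$ is a unit mod $b$, this same equivalence read in reverse shows that $\phi(\lambda) \in R_b(\a\b,n)$ if and only if $\lambda \in R_b(\a,n)$. It therefore suffices to show that $\phi$ is a bijection. Both source and target have cardinality $b^2$ (as $\a$ and $\a\b$ are free $\Z$-modules of rank $2$), so I only need to verify injectivity.

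For injectivity, I would argue as follows: if $\mu(\lambda-\lambda') \in b\a\b$ with $\lambda-\lambda' \in \a$, then passing to principal fractional ideals gives $(\mu)(\lambda-\lambda') \subseteq b\a\b$, and dividing by $(\mu) = \b\c$ yields $(\lambda-\lambda') \subseteq b\a\c^{-1}$, hence $\lambda-\lambda' \in \a \cap b\a\c^{-1}$. A prime-by-prime comparison of $\p$-adic valuations, using that $v_\p(\c) = 0$ for every prime $\p \mid b\OK$ (which follows from $\gcd(N(\c),b) = 1$), shows that this intersection equals $b\a$; therefore $\lambda \equiv \lambda' \pmod{b\a}$.

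The main obstacle is this injectivity step: the congruence $N(\mu)/N(\b) \equiv 1 \pmod b$ is leveraged precisely to guarantee $\gcd(N(\c),b) = 1$, which is exactly what allows the extra factor $\c^{-1}$ acquired when inverting multiplication by $\mu$ to be harmlessly absorbed at every prime dividing $b$.
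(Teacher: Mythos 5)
Your proof is correct and follows essentially the same route as the paper: multiplication by an element of $\b$ whose relative norm is $\equiv 1 \pmod b$, with the key point in both arguments being that this norm condition forces $\nu_\p$-triviality of the "extra" factor at every prime $\p$ dividing $b$. The only difference is cosmetic: the paper constructs the inverse map explicitly via an element $\mu \in \b^{-1}$ with $\lambda\mu \equiv 1 \pmod b$, whereas you conclude bijectivity from injectivity plus the cardinality count $\#(\a/b\a) = b^2 = \#(\a\b/b\a\b)$, which works equally well.
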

\begin{proof}
Because of $R_b(\b,1) \ne \emptyset$ there exists a $\lambda \in \b$ with $N(\lambda)/N(\b) \equiv 1 \pmod b$.
Now for $x \in R_b(\a,n)$ we have $\lambda x \in R_b(\a\b,n)$ because of
\[
\frac{N(\lambda x)}{N(\a \b)} = \frac{N(\lambda)}{N(\b)} \frac{N(x)}{N(\a)} \equiv 1 \cdot n \equiv n \pmod b.
\]
It follows the existence of the map
\[
R(\a,n,b) \to R(\a\b,n,b),\quad x \mapsto \lambda x.
\]
We show that this map is bijective by finding a $\mu \in R_b(\b^{-1},1)$ with $\lambda \mu \equiv 1 \pmod b$ (which induces the inverse mapping then).
Let $p$ be prime with $p \mid b$. Then we have because of $N(\lambda)/N(\b) \equiv 1 \pmod b$ and
\[
\frac{N(\lambda)}{N(\b)} \OK = \frac{\lambda}{\b} \frac{\lambda'}{\b'}
\]
the equality $\nu_\p(\lambda) = \nu_\p(\b)$ for all prime ideals $\p$ over $p$.
That implies that there exists a $\mu \in \b^{-1}$ with $\lambda \mu \equiv 1 \pmod b$.
We obtain $\lambda' \mu' \equiv 1 \pmod b$. Therefore, we get
\[
\frac{N(\mu)}{N(\b^{-1})} \equiv \frac{N(\mu)}{N(\b^{-1})} \frac{N(\lambda)}{N(\b)} = \lambda \mu \lambda' \mu' \equiv 1 \pmod b.
\]
\end{proof}

\begin{lemma} \label{narrow-same-rep-numbers} 
Let $\a,\b \in \IK$ be two equivalent ideals in the narrow sense. Then we have
\[
N_b(\a,n) = N_b(\b,n)
\]
for all $n \in \Z$ and $b \in \N$.
\end{lemma}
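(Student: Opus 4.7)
The plan is to derive this directly from Lemma~\ref{one-implies-n}. Two ideals $\a,\b \in \IK$ are equivalent in the narrow sense precisely when there exists $\gamma \in K^\times$ with $N(\gamma) > 0$ (equivalently, $\gamma$ totally positive, after possibly replacing $\gamma$ by $-\gamma$) such that $\b = \gamma\a$. The key observation is that the principal ideal $(\gamma)$ then has norm $N((\gamma)) = |N(\gamma)| = N(\gamma)$, which is exactly the condition needed to feed the hypothesis of Lemma~\ref{one-implies-n}.

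Concretely, I would write $\b = \gamma\a = (\gamma)\cdot\a$ and apply Lemma~\ref{one-implies-n} with the pair of ideals $\a$ and $(\gamma)$. To invoke that lemma I need $R_b((\gamma),1) \ne \emptyset$, and for this I would simply exhibit $\lambda := \gamma \in (\gamma)$, noting that
\[
\frac{N(\gamma)}{N((\gamma))} = \frac{N(\gamma)}{|N(\gamma)|} = 1
\]
by the narrowness assumption $N(\gamma) > 0$. This places $\gamma$ inside $R_b((\gamma),1)$ for every $b \in \N$, so Lemma~\ref{one-implies-n} applies and gives $N_b(\a,n) = N_b(\a\cdot(\gamma),n) = N_b(\b,n)$.

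There is no real obstacle here; the lemma is essentially an immediate corollary of Lemma~\ref{one-implies-n} once one recognizes that the narrow-equivalence hypothesis is exactly what is needed to guarantee $N(\gamma)/|N(\gamma)| = 1$ rather than $-1$. In particular, this sharpness is why the statement cannot be strengthened from narrow to ordinary ideal equivalence without further hypotheses: if $N(\gamma) < 0$, then $\gamma \in R_b((\gamma),-1)$ instead, and the translation by $\gamma$ no longer preserves $R_b(\cdot,n)$ but rather moves it to $R_b(\cdot,-n)$, which would at best yield $N_b(\a,n) = N_b(\b,-n)$ via an additional appeal to Lemma~\ref{b-square-invariance}.
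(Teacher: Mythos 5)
Your proof is correct. The paper proves the lemma directly: writing $\b = \lambda\a$ with $N(\lambda)>0$, it notes $N((\lambda))=N(\lambda)$ and checks that $x \mapsto \lambda x$ is already a bijection $R_b(\a,n) \to R_b(\b,n)$. Your route instead packages this as an application of Lemma~\ref{one-implies-n}, taking the auxiliary ideal there to be $(\gamma)$ and witnessing $R_b((\gamma),1)\ne\emptyset$ by $\gamma$ itself via $N(\gamma)/N((\gamma))=1$; unwinding that lemma's proof recovers exactly the map $x \mapsto \gamma x$, so the two arguments are the same bijection in different packaging. The direct version is marginally more economical, since for a principal ideal the inverse is simply multiplication by $\gamma^{-1}$ and one does not need the $\mu$-construction inside Lemma~\ref{one-implies-n}; your version has the virtue of making the logical dependence explicit and of isolating precisely where positivity of the norm enters. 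One small quibble with your closing remark: if $N(\gamma)<0$, the map $x\mapsto\gamma x$ already yields $N_b(\a,n)=N_b(\b,-n)$ with no appeal to Lemma~\ref{b-square-invariance}; that lemma would only be needed afterwards, to compare $N_b(\b,-n)$ with $N_b(\b,n)$, and only when $-1$ is a square modulo $b$.
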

\begin{proof}
We write $\b = \lambda \a$ with $\lambda \in K$	and $N(\lambda)>0$. This implies $N((\lambda))=N(\lambda)$. Therefore, the map
\[
R(\a,n,b) \to R(\b,n,b),\quad x \mapsto \lambda x
\]
delivers the aimed bijection.
\end{proof}

\begin{lemma} \label{square-1-nonempty} 
We have $R_b(\a^2,1) \ne \emptyset$ for all $\a \in \IK$ and $b \in \N$.
\end{lemma}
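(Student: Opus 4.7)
The plan is to reduce the statement to Lemma~\ref{b-square-invariance} by producing a single explicit element of $\a^2$ whose normalized norm is a square coprime to $b$. Specifically, if I can find $\alpha \in \a$ such that the integer
\[
u := \frac{N(\alpha)}{N(\a)} = N(\alpha\a^{-1})
\]
is coprime to $b$, then $\lambda := \alpha^2$ lies in $\a^2$ and satisfies $N(\lambda)/N(\a^2) = u^2$, which is a square in $(\Z/b\Z)^\times$. Hence $\lambda \in R_b(\a^2, u^2)$, giving $N_b(\a^2, u^2) \geq 1$. Applying Lemma~\ref{b-square-invariance} with $a = u^2$ and $n = 1$ then yields $N_b(\a^2, 1) = N_b(\a^2, u^2) \geq 1$, which is precisely the desired assertion.

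It remains to produce such an $\alpha$. The condition $\gcd(u, b) = 1$ is equivalent to requiring $\nu_\p(\alpha) = \nu_\p(\a)$ for every prime ideal $\p \subseteq \OK$ lying above a prime divisor of $b$. In $\OK$-module terms, $\alpha \in \a$ must avoid the finitely many proper submodules $\p\a \subsetneq \a$ indexed by those primes; such an $\alpha$ is readily produced by the Chinese remainder theorem applied to the natural surjection $\a \twoheadrightarrow \prod_{\p \mid b\OK} \a/\p\a$, choosing a nonzero component in each factor (each factor is a nonzero vector space over the residue field $\OK/\p$, so this is immediate).

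I do not expect any serious obstacle. The whole point of restricting to the squared ideal $\a^2$ is that the norm ratio of $\alpha^2$ is automatically the square of an integer, and Lemma~\ref{b-square-invariance} absorbs precisely this ambiguity. Morally, this reflects the fact that $\a^2$ always lies in the principal genus, so one should expect $N_b(\a^2, \cdot)$ to mimic the representation function of the trivial ideal class, and in particular to hit the value $n = 1$.
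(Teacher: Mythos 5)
Your proof is correct, but it takes a genuinely different route from the paper. The paper first replaces $\a$ by a prime ideal $\p$ in the same class that is coprime to $b$, invokes Lemma~\ref{narrow-same-rep-numbers} (narrow equivalence of $\a^2$ and $\p^2$) to transfer the count, and then uses the principal ideal $\p^{2m}=(x)$ with $N(x)>0$ to exhibit an element of $\p^2$ whose normalized norm is the square $N(\p)^{2(m-1)}$ coprime to $b$; the conclusion then follows from Lemma~\ref{b-square-invariance}, exactly as in your argument. You instead bypass the class-group detour entirely: by CRT you pick $\alpha\in\a$ with $\nu_\p(\alpha)=\nu_\p(\a)$ at all primes $\p$ over divisors of $b$, so that $u=N(\alpha)/N(\a)$ is an integer coprime to $b$, and then $\alpha^2\in\a^2$ has normalized norm $u^2$. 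This is more elementary and self-contained (it avoids Lemma~\ref{narrow-same-rep-numbers} and the existence of a prime, or at least an integral ideal, coprime to $b$ in the class of $\a$), and it is in fact the same local-generator construction the paper itself uses later in the proof of Lemma~\ref{a-gs}. One cosmetic point: $N(\alpha)/N(\a)$ equals the ideal norm $N(\alpha\a^{-1})$ only up to sign, since the element norm $N(\alpha)=\alpha\alpha'$ may be negative; this is harmless here because you only ever use $u^2$, but you should not assert the equality as written.
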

\begin{proof}
We choose a prime ideal $\p$ of the same ideal class as $\a$ coprime to $b$. Then we have that $\a^2$ and $\p^2$ are equivalent in the narrow sense ($\a = \lambda \p$ implies $\a^2 = \lambda^2 \p^2$ with $N(\lambda^2)=N(\lambda)^2>0$). Lemma~\ref{narrow-same-rep-numbers} implies
\[ N_b(\a^2,1) = N_b(\p^2,1). \]
Therefore, it is enough to show $R_b(\p^2,1) \ne \emptyset$.
Now let $m \in \N$ be the order of $\p$ in the ideal class group. Thus, there exists an $x \in \p^2$ with $N(x)>0$ and $\p^{2m}=(x)$. We define
\[
n := \frac{N(x)}{N(\p^2)} = N(\p)^{2(m-1)}
\]
and obtain $x \in R_b(\p^2,n)$. Since $n$ is coprime to $b$ and a square in $\Z$, Lemma~\ref{b-square-invariance} implies $R_b(\p^2,1) \ne \emptyset$.
\end{proof}

\begin{proposition} \label{rep-genus} 
Let $\a,\b \in \IK$ be ideals of the same genus. Then we have
\[
N_b(\a,n) = N_b(\b,n)
\]
for all $n \in \Z$ and $b \in \N$.
\end{proposition}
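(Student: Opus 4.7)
The plan is to assemble the four preceding lemmas into a chain that transports $\a$ to $\b$ through an intermediate ideal of the form $\a \c^2$. The key input is the group-theoretic description of the genus: two ideals $\a,\b \in \IK$ lie in the same genus precisely when their classes differ by a square in the narrow ideal class group, i.e.\ there exist an ideal $\c \in \IK$ and a $\lambda \in K$ with $N(\lambda) > 0$ such that $\b = \lambda \a \c^2$. I would either invoke this characterization or sketch it briefly (the genus group is, by definition, the narrow class group modulo its subgroup of squares).

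Granting this, the proof is then almost a one-liner. First I would apply Lemma~\ref{narrow-same-rep-numbers} to $\b$ and $\a\c^2$, which are narrowly equivalent by the relation $\b = \lambda \a \c^2$ with $N(\lambda) > 0$, giving
\[
N_b(\b, n) = N_b(\a \c^2, n)
\]
for all $n \in \Z$ and $b \in \N$. Next, by Lemma~\ref{square-1-nonempty} we have $R_b(\c^2, 1) \ne \emptyset$, so Lemma~\ref{one-implies-n} (applied with the role of its $\b$ played by $\c^2$) yields
\[
N_b(\a \c^2, n) = N_b(\a, n).
\]
Combining the two identities gives the desired equality $N_b(\a,n) = N_b(\b,n)$.

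The only real content beyond bookkeeping is the identification of the genus with the quotient of the narrow class group by squares. If the author has not mentioned this earlier in the paper, I would include a short justification: the principal genus (the class of $\OK$) is generated by the squares in the narrow class group, and the remaining genera are its cosets, which is exactly what is needed to write $\b = \lambda \a \c^2$ as above. Everything else is a direct citation of the already-proved lemmas, so no further computation is required.
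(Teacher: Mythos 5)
Your proof is correct and matches the paper's own argument essentially verbatim: both write $\b = \lambda\a\c^2$ with $N(\lambda)>0$ and chain Lemma~\ref{narrow-same-rep-numbers} with Lemmas~\ref{square-1-nonempty} and~\ref{one-implies-n} (you apply the narrow-equivalence step between $\b$ and $\a\c^2$ rather than between $\a$ and $\lambda\a$, which is an immaterial reordering). The paper also takes the characterization of the genus via the narrow class group modulo squares for granted, just as you propose to.
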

\begin{proof}
Let $\b = \lambda \a \c^2$ with $N(\lambda)>0$. Then we have
\[
N_b(\a,n) = N_b(\lambda \a,n) = N_b(\lambda \a \c^2,n).
\]
The first equality follows from Lemma~\ref{narrow-same-rep-numbers}, the second with $R_b(\c^2,1) \ne \emptyset$ (Lemma~\ref{square-1-nonempty}) from Lemma~\ref{one-implies-n}.
\end{proof}

\section{Evaluation of Gauss sums}

We define the Gauss sum
\[
G_b(\a,a) = \sum_{\lambda \in \a/b \a} e \br{ \frac{aN(\lambda)}{bN(\a)} }
\]
for $\a \in \IK$, $a \in \Z$ and obtain for prime powers $b=p^\beta$
\begin{align} \label{Nam-by-gauss} 
\begin{split}
&\frac{1}{p^\beta} \sum_{a \in \Z / p^\beta \Z} G_{p^\beta}(\a,a) e \br{-\frac{am}{p^\beta}}\\
= &\frac{1}{p^\beta} \sum_{a \in \Z / p^\beta \Z} \sum_{\lambda \in \a/p^\beta \a} e \br{ \frac{aN(\lambda)/N(\a)}{p^\beta} } e \br{-\frac{am}{p^\beta}}\\
= &\sum_{\lambda \in \a/p^\beta \a} \frac{1}{p^\beta} \sum_{a \in \Z / p^\beta \Z}  e \br{ \frac{a(N(\lambda)/N(\a)-m)}{p^\beta} }\\
= &N_{p^\beta}(\a,m)
\end{split}
\end{align}
because of 
\[
\sum_{a \in \Z / p^\beta \Z}  e \br{ \frac{a c}{p^\beta} }
= \begin{cases}
p^\beta,&\quad c \equiv 0 \pmod{p^\beta},\\
0,&\quad c \not\equiv 0 \pmod{p^\beta}.
\end{cases}
\]
Hence, in order to evaluate the multiplicative representation numbers $N_b(\a,m)$, it is enough to evaluate the Gauss sums $G_{p^\beta}(\a,a)$. We do this in this section. The idea to use Gauss sums to evaluate representation numbers was already present in Siegel's work. In this particular instance we have strong similarities with \cite{bruinier2021arithmetic}.

We start by computing the Gauss sums for $\a = \OK$. In order to do so we recall the evaluation of the classical Gauss sum
\begin{align} \label{classic-gs} 
\sum_{x \in \Z / c\Z} e \kron{ax^2}{c} = \ep_c \sqrt{c} \kron{a}{c}
\end{align}
for odd $c>0$ with coprime $a \in \Z$.
Here, $\ep_c=1$ for $c \equiv 1 \pmod 4$ und $\ep_c=i$ otherwise.

\begin{lemma} \label{ok-gs} 
Let $p$ be prime, $b=p^\beta \in \N$ and $a = a_0 p^\alpha \in \Z$ with $p^\alpha=\gcd(a,b)$. Then it holds
\[
G_b(\OK,a) =
\begin{cases}
p^{2\beta},&\quad \alpha= \beta,\\
(\chi_D(p)p)^{\alpha+\beta},&\quad p \nmid D,\\
\ep_p p^{\alpha+\beta+1/2} \kron{a_0}{p} \kron{-D_0}{p}^{\alpha+\beta+1} ,&\quad p\mid D, \alpha < \beta.
\end{cases}
\]
For the last case we define $D_0=D/p$.
\end{lemma}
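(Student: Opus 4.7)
The plan is to parameterize $\OK/p^\beta\OK$ by pairs of residues, diagonalize the resulting two-variable quadratic Gauss sum into a product of one-variable Gauss sums, and then apply~\eqref{classic-gs} together with the standard prime-power lifting trick. Since $D$ is odd, $D \equiv 1 \pmod 4$, so every $\lambda \in \OK$ is uniquely $\lambda = x + y\tfrac{1+\sqrt{D}}{2}$ with $x,y \in \Z$, and $N(\lambda) = x^2 + xy + \tfrac{1-D}{4}y^2$. For $p$ odd, $2$ is invertible modulo $p^\beta$, so the substitution $u = x + y/2$, $w = y/2$ is a bijection of $(\Z/p^\beta\Z)^2$ that transforms the norm form into $u^2 - Dw^2$, factorizing
\[ G_{p^\beta}(\OK,a) = \br{\sum_{u\in\Z/p^\beta\Z} e\br{\tfrac{au^2}{p^\beta}}}\br{\sum_{w\in\Z/p^\beta\Z} e\br{\tfrac{-aDw^2}{p^\beta}}}. \]
The case $\alpha = \beta$ is then immediate. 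For $\alpha < \beta$, the lifting $x = x_1 + p^{\beta-\gamma}x_2$ combined with~\eqref{classic-gs} evaluates
\[ \sum_{x\in\Z/p^\beta\Z} e\br{\tfrac{c_0 p^\gamma x^2}{p^\beta}} = p^\gamma \ep_{p^{\beta-\gamma}} p^{(\beta-\gamma)/2}\kron{c_0}{p^{\beta-\gamma}} \]
for $\gcd(c_0,p)=1$ and $\gamma < \beta$, and yields $p^\beta$ when $\gamma \geq \beta$.

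For $p \nmid D$ both coefficients $a$ and $-aD$ have valuation exactly $\alpha$. Multiplying the two evaluated sums and using $\kron{a_0}{p^{\beta-\alpha}}^2 = 1$ gives $p^{\alpha+\beta}\ep_{p^{\beta-\alpha}}^2\kron{-D}{p^{\beta-\alpha}}$; together with $\ep_c^2 = \kron{-1}{c}$ this collapses to $p^{\alpha+\beta}\chi_D(p)^{\beta-\alpha}$, and the identity $\chi_D(p)^{2\alpha}=1$ (valid since $\chi_D(p)=\pm 1$) rewrites the exponent as $\alpha+\beta$, producing $(\chi_D(p)p)^{\alpha+\beta}$. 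For $p \mid D$, I would write $D = pD_0$ with $\gcd(D_0,p)=1$; then $-aD$ has valuation $\alpha+1$, and I would split into $\alpha = \beta-1$ (where the $w$-sum is simply $p^\beta$ and the $u$-sum reduces to a classical Gauss sum modulo $p$ that matches the formula directly) and $\alpha \leq \beta-2$ (where the lifting applies to both factors).

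The main technical obstacle lies in this latter subcase, specifically in the product $\ep_{p^{\beta-\alpha}}\ep_{p^{\beta-\alpha-1}}$: the two exponents $\beta-\alpha$ and $\beta-\alpha-1$ have opposite parity, so exactly one factor equals $\ep_p$ while the other equals $1$, regardless of $p\bmod 4$, and the product is therefore $\ep_p$. Combining this with $\kron{a_0}{p}^2 = 1$ and $\kron{-D_0}{p}^{2\alpha+2}=1$ allows the resulting exponent $\beta-\alpha-1$ on $\kron{-D_0}{p}$ to be rewritten as $\alpha+\beta+1$, recovering the stated $\ep_p p^{\alpha+\beta+1/2}\kron{a_0}{p}\kron{-D_0}{p}^{\alpha+\beta+1}$. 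Nothing deeper than~\eqref{classic-gs} is needed; the bulk of the work is careful bookkeeping of $\ep$-factors and Legendre exponents. The edge case $p=2$ (which, since $D$ is odd, can only occur in the $p \nmid D$ branch) falls outside the above diagonalization and would be handled by a short separate calculation exploiting the split/inert structure of $\OK\otimes\Z_2$, both subcases yielding the same formula $(\chi_D(2)\cdot 2)^{\alpha+\beta}$.
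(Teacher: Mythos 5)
Your proof is correct and follows essentially the same route as the paper's: diagonalize the norm form on $\OK/p^\beta\OK$ using the inverse of $2$ modulo $p^\beta$, factor the sum into two classical Gauss sums with coefficients $a$ and $-aD$, and evaluate these via~\eqref{classic-gs} together with the prime-power lifting, deferring $p=2$ (as the paper also does, by citation). Your uniform handling of $\ep_{p^{\beta-\alpha}}\ep_{p^{\beta-\alpha-1}}=\ep_p$ via the parity observation is marginally cleaner bookkeeping than the paper's case distinction by $p \bmod 4$, but it is the same argument.
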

\begin{proof}
The case $p=2$ is discussed in the proof of \cite[Lemma 2.6.1]{bruinier2021arithmetic} in detail.
We continue with $p$ being odd. By definition of $G_b(\OK,a)$ we have
\begin{align*}
G_b(\OK,a)
&= \sum_{k,j \ (b)} e \br{ \frac{ a N \br{ k +j \tfrac{1+\sqrt{D}}{2}} }{b} }.
\end{align*}
It is easy to check that
\[
N \br{ k +j \tfrac{1+\sqrt{D}}{2}} \equiv N \br{ k +j l (1+\sqrt{D})} \pmod b
\]
with $l \in \Z$ satisfying $2 l \equiv 1 \pmod b$. Thus, we obtain
\begin{align*}
G_b(\OK,a)
&= \sum_{k,j \ (b)} e \br{ \frac{ a(k+jl)^2 - a (jl)^2D }{b} }\\
&= \sum_{j} e \br{ \frac{  - a (jl)^2D }{b} } \sum_{k} e \br{ \frac{ a(k+jl)^2  }{b} } \\
&= \sum_{j} e \br{ \frac{  - a j^2 D }{b} } \sum_{k} e \br{ \frac{ ak^2  }{b} }.
\end{align*}
In case $\alpha=\beta$ we sum only ones and the statement is clear.
Otherwise we use the formula~\eqref{classic-gs} for the classical Gauss sum
\[
\sum_{k\ (b)} e \br{ \frac{ ak^2  }{b} } 
= p^\alpha \sum_{k\ (p^{\beta-\alpha})} e \br{ \frac{ a_0k^2  }{p^{\beta-\alpha}} }
= p^\alpha \ep_{p^{\beta-\alpha}} \sqrt{p^{\beta-\alpha}} \kron{a_0}{p^{\beta-\alpha}}.
\]
Analogously, we obtain in case $p \nmid D$
\[
\sum_{j} e \br{ \frac{  - a j^2 D }{b} }  = p^\alpha \ep_{p^{\beta-\alpha}} \sqrt{p^{\beta-\alpha}} \kron{-a_0D}{p^{\beta-\alpha}}.
\]
As product we obtain
\[
p^{\alpha+\beta} \ep_{p^{\beta-\alpha}}^2 \kron{-D}{p^{\beta-\alpha}}.
\]
A case distinction by $p \pmod 4$ delivers the aimed result.
In case $p \mid D$ we proceed analogously and draw $p^{\alpha+1}$ out:
\begin{align*}
\sum_{j \ (b)} e \br{ \frac{  - a D j^2 }{b} }
&= p^{\alpha+1} \sum_{j \ (p^{\beta-\alpha-1})} e \br{ \frac{  - a_0 D_0j^2 }{p^{\beta-\alpha-1}} }\\
&= p^{\alpha+1} \ep_{p^{\beta-\alpha-1}} p^{(\beta-\alpha-1)/2} \kron{- a_0 D_0}{p^{\beta-\alpha-1}}.
\end{align*}
We form the product
\begin{align*}
p^\alpha \ep_{p^{\beta-\alpha}} &\sqrt{p^{\beta-\alpha}} \kron{a_0}{p^{\beta-\alpha}} p^{\alpha+1} \ep_{p^{\beta-\alpha-1}} p^{(\beta-\alpha-1)/2} \kron{- a_0 D_0}{p^{\beta-\alpha-1}}\\
&= \ep_p p^{\alpha+\beta+1/2} \kron{a_0}{p}^{\beta-\alpha} \kron{- a_0 D_0}{p}^{\beta-\alpha-1}\\
&= \ep_p p^{\alpha+\beta+1/2} \kron{a_0}{p} \kron{-D_0}{p}^{\alpha+\beta+1}.
\end{align*}
\end{proof}

\begin{remark} \label{coprime-legendre} 
Let $p$ be an odd prime and $\a \in \IK$ coprime to $p$. Then $N(\a)$ might not be integral, but we have $\nu_p(N(\a))=0$. Hence, we can interpret the rational number $N(\a)$ as element of $(\Z/p\Z)^\times$ and the Legendre symbol
\[
\kron{N(\a)}{p}
\]
is well defined and non-zero. That is how we read such expressions in this paper.
\end{remark}

\begin{lemma} \label{a-gs} 
Let $p$ be prime, $b=p^\beta \in \N$, $\a \in \IK$ and $a = a_0 p^\alpha \in \Z$ with $p^\alpha=\gcd(a,b)$. Then it holds
\[
G_b(\a,a) =
\begin{cases}
p^{2\beta},&\quad \alpha= \beta,\\
(\chi_D(p)p)^{\alpha+\beta},&\quad p \nmid D,\\
\ep_p p^{\alpha+\beta+1/2} \kron{a_0 N(\a)}{p} \kron{-D_0}{p}^{\alpha+\beta+1} ,&\quad p\mid D, \alpha < \beta.
\end{cases}
\]
In the last case we additionally assume that $\a$ is coprime to $p$ (cf.~Remark~\ref{coprime-legendre}).
\end{lemma}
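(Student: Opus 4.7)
The strategy is to reduce Lemma~\ref{a-gs} to Lemma~\ref{ok-gs} by arranging that $\a$ is coprime to $p$ and then identifying the local modules $\a \otimes \Z_p$ and $\OK \otimes \Z_p$, tracking how the normalization $1/N(\a)$ enters the norm form. The case $\alpha = \beta$ is trivial: then $a \equiv 0 \pmod{p^\beta}$, every exponential is $1$, and the sum is $|\a/p^\beta\a| = p^{2\beta}$.

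For the remaining cases I first observe that the Gauss sum is invariant under scaling $\a \mapsto \lambda\a$ for $\lambda \in K^\times$: the map $x \mapsto \lambda x$ induces a bijection $\a/p^\beta\a \to \lambda\a/p^\beta\lambda\a$ with $N(\lambda x)/N(\lambda\a) = N(x)/N(\a)$, so $G_b(\a,a) = G_b(\lambda\a,a)$. In case~3, $\a$ is coprime to $p$ by assumption. In case~2 ($p \nmid D$), weak approximation supplies a $\lambda \in K^\times$ with $\nu_\p(\lambda) = -\nu_\p(\a)$ for all primes $\p \mid p$, making $\lambda\a$ coprime to $p$. So in both cases we may assume $\a$ is coprime to $p$.

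With $\a$ coprime to $p$ we have $\a \otimes \Z_p = \OK \otimes \Z_p$ as subsets of $K \otimes \Q_p$, hence $\a/p^\beta\a = \OK/p^\beta\OK$ as $\OK$-modules. Moreover $\nu_p(N(\a)) = 0$, so $N(\a)$ is a unit modulo $p^\beta$; choose $c \in \Z$ with $c \cdot N(\a) \equiv 1 \pmod{p^\beta}$. Then
\[
G_b(\a,a) = \sum_{\lambda \in \OK/p^\beta\OK} e\br{\frac{a N(\lambda)}{p^\beta N(\a)}} = \sum_{\lambda \in \OK/p^\beta\OK} e\br{\frac{a c N(\lambda)}{p^\beta}} = G_b(\OK,ac),
\]
and $ac$ has the same $p$-adic valuation $\alpha$ as $a$, with $(ac)_0 \equiv a_0 c \pmod p$.

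Finally I apply Lemma~\ref{ok-gs} to $G_b(\OK, ac)$. In case~2 the answer $(\chi_D(p)p)^{\alpha+\beta}$ does not depend on the unit $c$. In case~3 the Legendre symbol becomes
\[
\kron{a_0 c}{p} = \kron{a_0}{p}\kron{N(\a)^{-1}}{p} = \kron{a_0 N(\a)}{p}
\]
since $\kron{\cdot}{p}$ is its own inverse, matching the target formula. The main technical point — and thus the step most prone to error — is the local identification $\a/p^\beta\a = \OK/p^\beta\OK$ together with the correct rescaling of the norm form by $1/N(\a)$; everything else is then essentially a direct application of Lemma~\ref{ok-gs}.
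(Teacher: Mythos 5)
Your proof is correct and follows essentially the same route as the paper: both reduce $G_{p^\beta}(\a,a)$ to the case $\a=\OK$ via the local identification of $\a$ with $\OK$ at $p$ and then invoke Lemma~\ref{ok-gs}; the paper does this by multiplying by a generator $x$ of $\a\otimes\Z_p$ over $\OK\otimes\Z_p$ (which forces it to verify $\kron{N(x)}{p}=1$ when $p\mid D$), whereas you absorb the unit $N(\a)^{-1}\bmod p^\beta$ directly, which makes the Legendre-symbol step trivial. One small slip in your rescaling step: since $N(\lambda\a)=\abs{N(\lambda)}N(\a)$ while $N(\lambda x)=N(\lambda)N(x)$, one has $N(\lambda x)/N(\lambda\a)=\sgn(N(\lambda))\,N(x)/N(\a)$, so the asserted invariance $G_b(\a,a)=G_b(\lambda\a,a)$ only holds as stated for $N(\lambda)>0$; this is harmless here because you only rescale in the case $p\nmid D$, where the target formula is independent of $a_0$ (and one can in any case choose $\lambda$ totally positive), but the claim should be stated with that proviso.
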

\begin{proof}
The Gauss sum $G_{p^\beta}(\a,a)$ depends only on $\a_p := \a \otimes_{\Z} \Z_p$. However, $\a_p$ is generated as $\OKp := \OK \otimes_{\Z} \Z_p$ module by a single element $x \in \a$. Namely, one easily verifies
\[
\a_p \cap K = \Z_{(p)} \a
\quad\text{with}\quad
\Z_{(p)} := \Z_p \cap \Q = \set{\frac{a}{b} \defC a,b \in \Z, p \nmid b}.
\]
This implies $\OKp x = \a_p$ for all $x \in \a$ which satisfy $\nu_\p(x)=\nu_\p(\a)$ for the prime ideals $\p$ over $p$.
Thus, the integer $N(x)/N(\a)$ is coprime to $p$ for such $x \in \a$.
We obtain for such a generator $x \in \a$
\[
G_{p^\beta}(\a,a) = G_{p^\beta} \br{\OK,a \frac{N(x)}{N(\a)} }.
\]
Using Lemma~\ref{ok-gs}  and the fact that $N(x)/N(\a)$ is coprime to $p$ the only thing left to show is
\[
\kron{a_0 N(x)/N(\a)}{p} = \kron{a_0 N(\a)}{p}
\]
in case $p \mid D$. This follows from 
\[
\kron{N(x)}{p} = 1
\]
where we intepret the rational number $N(x) \in \Z_{(p)}$ as element of $(\Z / p \Z)^\times$. The equation follows from
\[
N(x) = c^2-d^2D \equiv c^2 \pmod p
\]
for $x= c + d\sqrt{D}$.
\end{proof}

\section{Evaluation of representation numbers \texorpdfstring{$N_b(\a,m)$}{N\_b(a,m)}}

Lemma \ref{a-gs} together with equation~\eqref{Nam-by-gauss} allows us to compute the representation numbers which we do in the upcoming proposition.
\begin{proposition} \label{rep-numbers} 
Let $b=p^\beta$ and $m = m_0 p^\nu$ with $p^\nu=\gcd(m,b)$.
\begin{enumerate}[(i)]
\item In case $\chi_D(p)=1$
\[
N_{b}(\a,m) =
\begin{cases}
(\nu+1) (p-1) p^{\beta-1} ,&\quad \nu < \beta,\\
(\beta+1) p^\beta -\beta p^{\beta-1},&\quad \nu = \beta.
\end{cases}
\]
\item In case $\chi_D(p)=-1$
\[
N_{b}(\a,m) =
\begin{cases}
(p+1) p^{\beta-1} ,&\quad \nu < \beta,\ 2 \mid \nu,\\
0,&\quad \nu < \beta,\ 2 \nmid \nu,\\
p^\beta ,&\quad \nu = \beta,\ 2 \mid \beta,\\
p^{\beta-1} ,&\quad \nu = \beta,\ 2 \nmid \beta.
\end{cases}
\]
\item In case $\chi_D(p)=0$ with $\a \in \IK$ coprime to $p$ and $D_0:=D/p$ 
\[
N_{b}(\a,m) = 
\begin{cases}
\br{1 + \kron{-D_0}{p}^\nu \kron{m_0 N(\a)}{p}} p^\beta,&\quad \nu < \beta,\\
p^\beta,&\quad \nu = \beta.
\end{cases}
\]
\end{enumerate}
\end{proposition}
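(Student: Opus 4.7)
The plan is to apply formula~\eqref{Nam-by-gauss}, which expresses $N_{p^\beta}(\a,m)$ as the finite Fourier transform of $a \mapsto G_{p^\beta}(\a,a)$, and substitute the explicit values provided by Lemma~\ref{a-gs}. Since those values depend on $a$ only through $\alpha := \min(\nu_p(a),\beta)$, I would group the sum over $a \in \Z/p^\beta\Z$ by $\alpha$: for each $\alpha \in \{0,\dots,\beta-1\}$, write $a = a_0 p^\alpha$ with $a_0 \in (\Z/p^{\beta-\alpha}\Z)^\times$; the term $\alpha = \beta$ corresponds to $a \equiv 0$ and contributes $p^{2\beta}$ directly. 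The inner sum over $a_0$ is then a Ramanujan sum in cases (i) and (ii) and a twisted quadratic Gauss sum in case (iii).

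For case (i) ($\chi_D(p)=1$), the factor $p^{\alpha+\beta}$ is independent of $a_0$, so after inserting the standard Ramanujan sum evaluation
\[
c_{p^j}(m) = \begin{cases} p^j - p^{j-1}, & p^j \mid m,\\ -p^{j-1}, & p^{j-1} \| m,\\ 0, & p^{j-1} \nmid m, \end{cases}
\]
with $j := \beta - \alpha$, the sum over $\alpha$ collapses to a short expression yielding the two claimed formulae, distinguishing $\nu < \beta$ from $\nu = \beta$. Case (ii) ($\chi_D(p)=-1$) proceeds identically, but the coefficient $(-p)^{\alpha+\beta}$ introduces alternating signs whose cancellation pattern depends on the parities of $\nu$ and $\beta$, producing the four-way split.

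The main work lies in case (iii) ($\chi_D(p)=0$), where the inner sum becomes
\[
S(j,m) := \sum_{a_0 \in (\Z/p^j\Z)^\times} \kron{a_0}{p} e\!\br{-\frac{a_0 m}{p^j}}, \qquad j := \beta - \alpha.
\]
Decomposing $a_0 = a_1 + p a_2$ with $a_1 \in (\Z/p\Z)^\times$ and $a_2 \in \Z/p^{j-1}\Z$, the sum over $a_2$ forces $p^{j-1}\mid m$, while the sum over $a_1$ is the classical Gauss sum~\eqref{classic-gs}. Hence $S(j,m)$ vanishes except when $j = \nu+1$ (equivalently $\alpha = \beta-\nu-1$, valid only if $\nu < \beta$), in which case $S(j,m) = p^\nu \kron{-m_0}{p} \ep_p \sqrt{p}$.

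Combining this with the prefactor from Lemma~\ref{a-gs}, the identities $\ep_p^2 = \kron{-1}{p}$ and $\kron{-D_0}{p}^{2\beta}=1$ reduce $\kron{-D_0}{p}^{\alpha+\beta+1}$ at $\alpha=\beta-\nu-1$ to $\kron{-D_0}{p}^{\nu}$, and $\kron{-1}{p}\kron{-m_0}{p}\kron{N(\a)}{p}$ collapses to $\kron{m_0 N(\a)}{p}$; the $p$-powers combine to $p^{2\beta}$. Adding the $\alpha=\beta$ contribution $p^{2\beta}$ and dividing by $p^\beta$ delivers the formula for $\nu < \beta$, while for $\nu = \beta$ all $S(j,m)$ vanish and only $\alpha=\beta$ survives, giving $p^\beta$. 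The main obstacle is precisely this bookkeeping in case (iii) — verifying that the exponent $\alpha+\beta+1$ cleanly reduces to $\nu$ and that all signs and Legendre factors conspire into the stated compact form.
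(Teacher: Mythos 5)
Your proposal is correct and follows essentially the same route as the paper: Fourier inversion via~\eqref{Nam-by-gauss} combined with Lemma~\ref{a-gs}, grouping $a$ by $\alpha=\min(\nu_p(a),\beta)$, Ramanujan sums for $p\nmid D$, and the twisted quadratic Gauss sum (nonvanishing only at $\alpha=\beta-\nu-1$) for $p\mid D$, with the same sign bookkeeping via $\ep_p^2=\kron{-1}{p}$.
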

\begin{proof}
Let $p \nmid D$. By equation~\eqref{Nam-by-gauss} and Lemma \ref{a-gs} we have
\begin{align*}
N_{p^\beta}(\a,m)
&= \frac{1}{p^\beta} \sum_{a \in \Z / p^\beta \Z} G_{p^\beta}(\a,a) e \br{-\frac{am}{p^\beta}}\\
&= \frac{1}{p^\beta} \sum_{\alpha=0}^\beta \sum_{a_0 \in ( \Z / p^{\beta-\alpha}\Z)^\times} (\chi_D(p)p)^{\alpha+\beta} e \br{-\frac{a_0 p^\alpha m}{p^\beta}}\\
&= \chi_D(p)^\beta \sum_{\alpha=0}^\beta (\chi_D(p)p)^{\alpha} \sum_{a_0 \in ( \Z / p^{\beta-\alpha}\Z)^\times}  e \br{-\frac{a_0 m}{p^{\beta-\alpha}}}.
\end{align*}
For $\alpha \ge \beta - \nu$ we obtain
\begin{align*}
\sum_{a_0 \in ( \Z / p^{\beta-\alpha}\Z)^\times}  e \br{-\frac{a_0 m}{p^{\beta-\alpha}}}
&= \sum_{a_0 \in ( \Z / p^{\beta-\alpha}\Z)^\times}  e \br{- a_0 m_0p^{\alpha-\beta+\nu}}\\
&= \sum_{a_0 \in ( \Z / p^{\beta-\alpha}\Z)^\times} 1\\
&= \begin{cases}
(p-1)p^{\beta-\alpha-1} , &\quad \alpha<\beta,\\
1, &\quad \alpha=\beta.
\end{cases}
\end{align*}
For $\alpha < \beta - \nu$ we obtain instead
\begin{align*}
\sum_{a_0 \in ( \Z / p^{\beta-\alpha}\Z)^\times}  e \br{-\frac{a_0 m}{p^{\beta-\alpha}}}
&= \sum_{a_0 \in ( \Z / p^{\beta-\alpha}\Z)^\times}  e \br{-\frac{a_0 m_0}{p^{\beta-\alpha-\nu}}}\\
&= p^\nu \mu(p^{\beta-\alpha-\nu}) =
\begin{cases}
0, &\quad \alpha < \beta-\nu-1,\\
-p^\nu, &\quad \alpha = \beta-\nu-1.
\end{cases}
\end{align*}
Thus, we obtain in case $\nu < \beta$
\begin{align*}
N_{p^\beta}(\a,m)
&= \chi_D(p)^\beta \sum_{\alpha=0}^\beta (\chi_D(p)p)^{\alpha} \sum_{a_0 \in ( \Z / p^{\beta-\alpha}\Z)^\times}  e \br{-\frac{a_0 m}{p^{\beta-\alpha}}}\\
&= \chi_D(p)^\beta \br{ (\chi_D(p)p)^{\beta-\nu-1} (-p^\nu) +  \sum_{\alpha=\beta-\nu}^{\beta-1} (\chi_D(p)p)^{\alpha} (p-1)p^{\beta-\alpha-1} + (\chi_D(p)p)^{\beta} }\\
&= \chi_D(p)^\beta \br{ -\chi_D(p)^{\beta-\nu-1} p^{\beta-1} +  \sum_{\alpha=\beta-\nu}^{\beta-1} \chi_D(p)^\alpha (p-1)p^{\beta-1} + \chi_D(p)^\beta p^{\beta} }\\
&=-\chi_D(p)^{\nu+1} p^{\beta-1} +   (p-1)p^{\beta-1} \sum_{\alpha=\beta-\nu}^{\beta-1} \chi_D(p)^{\alpha+\beta} +  p^{\beta}.
\end{align*}
With $\chi_D(p)=1$ this results to
\[
N_{p^\beta}(\a,m) = - p^{\beta-1} +   (p-1)p^{\beta-1} \sum_{\alpha=\beta-\nu}^{\beta-1} 1 +  p^{\beta} = (\nu+1)(p-1)p^{\beta-1}.
\]
With $\chi_D(p)=-1$ the middle sum collapses for $\nu$ even and we obtain
\[
N_{p^\beta}(\a,m) = - (-1)^{\nu+1} p^{\beta-1} +  p^{\beta} = (p+1)p^{\beta-1}.
\]
For $\nu$ odd we obtain
\[
N_{p^\beta}(\a,m) = - (-1)^{\nu+1} p^{\beta-1} +   (p-1)p^{\beta-1} (-1)^{2\beta-\nu} +  p^{\beta}
=- p^{\beta-1}   -(p-1)p^{\beta-1}  +  p^{\beta} = 0.
\]
To finish the proof of the major case $p \nmid D$ we are left with the subcase $\nu = \beta$:
\begin{align*}
N_{p^\beta}(\a,m)
&= \chi_D(p)^\beta \sum_{\alpha=0}^\beta (\chi_D(p)p)^{\alpha} \sum_{a_0 \in ( \Z / p^{\beta-\alpha}\Z)^\times}  e \br{-\frac{a_0 m}{p^{\beta-\alpha}}}\\
&= \chi_D(p)^\beta  \br{ (\chi_D(p)p)^{\beta}  + \sum_{\alpha=0}^{\beta-1} (\chi_D(p)p)^{\alpha} (p-1)p^{\beta-\alpha-1}  }\\
&= \chi_D(p)^\beta  \br{ (\chi_D(p)p)^{\beta}  + (p-1) p^{\beta-1}\sum_{\alpha=0}^{\beta-1} \chi_D(p)^{\alpha}   }.
\end{align*}
Therefore, in case $\chi_D(p)=1$ we obtain
\[
N_{p^\beta}(\a,m) = p^\beta + (p-1) p^{\beta-1} \beta= (\beta+1)p^\beta - \beta p^{\beta-1}.
\]
In case $\chi_D(p)=-1$ we obtain for even $\beta$
\[
N_{p^\beta}(\a,m) = p^{\beta}  + (p-1) p^{\beta-1} \cdot 0 = p^{\beta} 
\]
and for odd $\beta$
\[
N_{p^\beta}(\a,m) = - \br { -p^{\beta}  + (p-1) p^{\beta-1} \cdot (+1)} = p^{\beta-1}.
\]
This finishes the proof of the major case $p \nmid D$ (case (i) and (ii)).
Now let $p \mid D$ which is case (iii).
We use equation~\eqref{Nam-by-gauss} and Lemma \ref{a-gs} again to obtain
\begin{align*}
N_{p^\beta}(\a,m)
&= \frac{1}{p^\beta} \sum_{a \in \Z / p^\beta \Z} G_{p^\beta}(\a,a) e \br{-\frac{am}{p^\beta}}\\
&= p^\beta + \frac{1}{p^\beta} \sum_{\alpha=0}^{\beta-1} \sum_{a_0 \in ( \Z / p^{\beta-\alpha}\Z)^\times} \ep_p p^{\alpha+\beta+1/2} \kron{a_0 N(\a)}{p} \kron{-D_0}{p}^{\alpha+\beta+1}  e \br{-\frac{a_0 p^\alpha m}{p^\beta}}\\
&= p^\beta + \kron{N(\a)}{p} \sum_{\alpha=0}^{\beta-1}  \frac{\ep_p p^{\alpha+\beta+1/2}}{p^\beta} \kron{-D_0}{p}^{\alpha+\beta+1}  \sum_{a_0 \in ( \Z / p^{\beta-\alpha}\Z)^\times}  \kron{a_0}{p}  e \br{-\frac{a_0 p^\alpha m}{p^\beta}}\\
&= p^\beta +\kron{N(\a)}{p} \sum_{\alpha=0}^{\beta-1}  \frac{\ep_p p^{\alpha+\beta+1/2}}{p^\beta} \kron{-D_0}{p}^{\alpha+\beta+1}  \sum_{a \in \Z / p^{\beta-\alpha}\Z}  \kron{a}{p}  e \br{\frac{-m_0a}{p^{\beta-\alpha-\nu}}}.
\end{align*}
For $r>1$ and $c$ coprime to the prime $p$ one has
\begin{align} \label{further-gs} 
\sum_{a \in \Z / p^r \Z}  \kron{a}{p}  e \br{\frac{ac}{p^r}} = 0.
\end{align}
We obtain
\[
\sum_{a \in \Z / p^{\beta-\alpha}\Z}  \kron{a}{p}  e \br{\frac{-m_0a}{p^{\beta-\alpha-\nu}}} =
\begin{cases}
0 , &\quad \beta-\alpha-\nu>1,\\
p^\nu \ep_p  \kron{-m_0}{p}\sqrt{p} , &\quad \beta-\alpha-\nu=1,\\
0 , &\quad \beta-\alpha-\nu \le 0.
\end{cases}
\]
The zero in the first line is due to equation~\eqref{further-gs}. The reason for the zero in the last line is that $(\Z / p^{\beta-\alpha}\Z)^\times$ has equally many squares and non-squares.

In case $\nu=\beta$ we are for all $\alpha$ in the third line and we obtain $N_{p^\beta}(\a,m)= p^\beta$.
In case $\nu \le \beta-1$ there is precisely one summand non-zero, the one of index $\alpha=\beta-\nu-1$ and we obtain
\begin{align*}
N_{p^\beta}(\a,m)
&= p^\beta +\kron{N(\a)}{p} \sum_{\alpha=0}^{\beta-1} \ep_p p^{\alpha+1/2}\kron{-D_0}{p}^{\alpha+\beta+1}  \sum_{a \in \Z / p^{\beta-\alpha}\Z}  \kron{a}{p}  e \br{\frac{-m_0a}{p^{\beta-\alpha-\nu}}}\\
&= p^\beta + \kron{N(\a)}{p} \ep_p p^{\beta-\nu-1+1/2}\kron{-D_0}{p}^{\beta-\nu-1+\beta+1} p^\nu \ep_p  \kron{-m_0}{p}\sqrt{p}\\
&= p^\beta + \kron{N(\a)}{p} \ep_p^2 p^{\beta}\kron{-D_0}{p}^\nu \kron{-m_0}{p}\\
&= \br{1 + \kron{-D_0}{p}^\nu \kron{m_0 N(\a)}{p}} p^\beta.
\end{align*}
This finishes the proof.
\end{proof}
\begin{remark}
The restriction $\a \in \IK$ coprime to $p$ in case (iii) of Proposition~\ref{rep-numbers} is no actual restriction since by Proposition~\ref{rep-genus} the representation numbers only depend on the genus. Hence, one only needs to replace $\a$ by a $\b \in \IK$ coprime to $p$ of the same genus.
\end{remark}

\section{Evaluation of Euler factors}
Now, having precise formulae for the representation numbers $N_b(\a,m)$ at hand,
we can use them to evaluate the series from equation~\eqref{first-euler} in the next proposition.

\begin{proposition} \label{rep-prime-series}
Let $p$ be prime, $\Re(s)>1$, $q := p^{1-s}$, $m = m_0 p^\nu \in \Z$ with $p \nmid m_0$ and $\a \in \IK$ coprime to $D$.
Then we have
\begin{align*}
\sum_{r=0}^\infty N_{p^r}(\a,m) p^{-rs} &= \frac{p-\chi_D(p)q}{p(1-q)} \cdot \frac{1- \chi_D(p)^{\nu+1} q^{\nu+1}}{1- \chi_D(p) q},& p \nmid D,\\
\frac{1}{p} \sum_{r=0}^\infty N_{p^{r+1}}(\a,m) p^{-rs} &= \frac{1 + \kron{-D/p}{p}^\nu \kron{N(\a) m_0}{p} q^\nu }{1-q}, &  p \mid D.
\end{align*}
\end{proposition}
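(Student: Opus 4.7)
The plan is to substitute the explicit formulae from Proposition~\ref{rep-numbers} into each Euler factor and reduce to a purely algebraic identity in $q = p^{1-s}$. Since the values $N_{p^r}(\a,m)$ given in Proposition~\ref{rep-numbers} are either geometric in $p^r$ or piecewise constant, the expectation is that after summing, everything collapses to geometric-series combinations.

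For $p \nmid D$, I would first unify cases~(i) and~(ii) of Proposition~\ref{rep-numbers} by writing $\chi := \chi_D(p) \in \setpm$ and checking that the four parity subcases admit a single uniform description
\begin{align*}
N_{p^r}(\a,m) &= p^{r-1}(p-\chi)\sum_{k=0}^{r-1}\chi^k + \chi^r p^r \quad\text{for } r \le \nu,\\
N_{p^r}(\a,m) &= (p-\chi)\Bigl(\sum_{k=0}^{\nu}\chi^k\Bigr) p^{r-1} \quad\text{for } r > \nu.
\end{align*}
Setting $T := \sum_{r=0}^\infty N_{p^r}(\a,m) p^{-rs}$, I would then compute $(1-q)T$. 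The key observation is that the differences $N_{p^r}(\a,m) - p N_{p^{r-1}}(\a,m)$ telescope to $\chi^r (p-1) p^{r-1}$ for $1 \le r \le \nu$, evaluate to $-\chi^{\nu+1}p^\nu$ at the transition $r = \nu+1$, and vanish for all $r > \nu+1$. Hence $(1-q)T$ collapses to a single geometric sum plus two boundary terms, and a final multiplication by $p(1-\chi q)$ reduces the claim to a short polynomial identity whose two sides both expand to $p - \chi q - p\chi^{\nu+1}q^{\nu+1} + \chi^{\nu+2}q^{\nu+2}$.

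For $p \mid D$ the computation is more direct. Proposition~\ref{rep-numbers}(iii) gives $N_{p^\beta}(\a,m) = p^\beta$ for $\beta \le \nu$ and $N_{p^\beta}(\a,m) = \bigl(1 + \kron{-D_0}{p}^\nu \kron{m_0 N(\a)}{p}\bigr) p^\beta$ for $\beta > \nu$. After the index shift $\beta = r+1$, the prefactor $1/p$ cancels one power of $p$, and the sum splits at $\beta = \nu$ into two geometric series in $q$ whose combination collapses to the stated fraction.

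The main obstacle is the $p \nmid D$ case: recognising that the four parity subcases admit a single common description in terms of $\chi_D(p)$, and then discovering the telescoping pattern of $N_{p^r}(\a,m) - p N_{p^{r-1}}(\a,m)$. Once these observations are in place, the rest is routine bookkeeping, with only the boundary $r = \nu+1$ and the correct handling of $N_{p^{-1}} := 0$ requiring care.
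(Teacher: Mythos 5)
Your proof is correct; I checked the key claims. The unified formula for $N_{p^r}(\a,m)$ in terms of $\chi:=\chi_D(p)\in\set{\pm 1}$ does reproduce all four subcases of Proposition~\ref{rep-numbers}(i)--(ii); the differences $N_{p^r}(\a,m)-pN_{p^{r-1}}(\a,m)$ are indeed $\chi^r(p-1)p^{r-1}$ for $1\le r\le\nu$, equal $-\chi^{\nu+1}p^{\nu}$ at $r=\nu+1$ and vanish beyond; and after multiplying by $p(1-\chi q)$ both sides do expand to $p-\chi q-p\chi^{\nu+1}q^{\nu+1}+\chi^{\nu+2}q^{\nu+2}$. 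The route differs from the paper's in the case $p\nmid D$: the paper substitutes the formulas of Proposition~\ref{rep-numbers} and evaluates the resulting geometric and arithmetic--geometric series separately in the three cases $\chi_D(p)=1$, $\chi_D(p)=-1$ with $\nu$ even, and $\chi_D(p)=-1$ with $\nu$ odd, simplifying the resulting rational function by hand each time. Your unification via $\chi$ together with the multiplication by $(1-q)$ trades that case analysis for a single telescoping computation; this is shorter, avoids the arithmetic--geometric series $\sum_r r q^r$ entirely, and makes the factor $1-q$ in the denominator transparent, at the price of the boundary care at $r=0$ and $r=\nu+1$ that you already flag. For $p\mid D$ the two arguments are essentially identical. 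One bookkeeping point worth making explicit in a write-up: when Proposition~\ref{rep-numbers} is applied with $b=p^r$, its parameter ``$\nu$'' is $\min(\nu_p(m),r)$, which is why its case $\nu=\beta$ governs all $r\le\nu_p(m)$ here and its case $\nu<\beta$ governs $r>\nu_p(m)$; your piecewise description uses this correctly but silently.
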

\begin{proof}
We use Proposition~\ref{rep-numbers} and start in the case $\chi_D(p)=1$:
\begin{align*}
&\sum_{r=0}^\infty N_{p^r}(\a,m) p^{-rs}\\
&= \sum_{r=0}^{\nu} \br{(r+1) p^r -r p^{r-1}}  p^{-rs} + \sum_{r=\nu+1}^\infty (\nu+1) p^{r-1} (p-1)p^{-rs}\\
&=\frac{p-1}{p} \sum_{r=0}^{\nu} r q^r + \sum_{r=0}^{\nu} q^r +  (\nu+1) \frac{p-1}{p} \sum_{r=\nu+1}^\infty  q^r\\
&=\frac{p-1}{p} \frac{q(\nu q^{\nu+1}-(\nu+1)q^\nu+1)}{(q-1)^2} + \frac{1-q^{\nu+1}}{1-q}  +  (\nu+1) \frac{p-1}{p}  \frac{q^{\nu+1}}{1-q} \\
&= \frac{p-1}{p (q-1)^2} \br{ q(\nu q^{\nu+1}-(\nu+1)q^\nu+1) + (\nu+1)q^{\nu+1}(1-q)  }
+ \frac{1-q^{\nu+1}}{1-q} \\
&= \frac{p-1}{p (q-1)^2} \br{ \nu q^{\nu+2}- \nu q^{\nu+1} -q^{\nu+1} + q + \nu q^{\nu+1} - \nu q^{\nu+2} +q^{\nu+1} - q^{\nu+2} }
+ \frac{1-q^{\nu+1}}{1-q} \\
&= \frac{p-1}{p (q-1)^2} \br{   q  - q^{\nu+2} } + \frac{1-q^{\nu+1}}{1-q} 
= \frac{pq  - pq^{\nu+2}-q  + q^{\nu+2} + p-pq^{\nu+1}-pq+ pq^{\nu+2} }{p (q-1)^2} \\
&= \frac{  -q  + q^{\nu+2} + p-pq^{\nu+1}  }{p (q-1)^2}
= \frac{ (p-q)(1-q^{\nu+1})}{p (q-1)^2}.
\end{align*}
We continue with $\chi_D(p)=-1$ for even $\nu$:
\begin{align*}
\sum_{r=0}^\infty N_{p^r}(\a,m) p^{-rs}
&= \sum_{r=0}^{\nu/2} p^{2r} p^{-2rs} + \sum_{r=0}^{\nu/2-1} p^{2r} p^{-(2r+1)s} + \sum_{r=\nu+1}^\infty (p+1) p^{(r-1)(1-s)}\\
&= \sum_{r=0}^{\nu/2} q^{2r} + \frac{q}{p} \sum_{r=0}^{\nu/2-1} q^{2r} + \frac{p+1}{p} \sum_{r=\nu+1}^\infty q^r\\
&= \frac{1-q^{\nu+2}}{1-q^2} + \frac{q}{p} \frac{1-q^{\nu}}{1-q^2} + \frac{p+1}{p} \frac{q^{\nu+1}}{1-q}\\
&= \frac{p-pq^{\nu+2} +q-q^{\nu+1} + (p+1)q^{\nu+1}(q+1) }{p(1-q^2)}\\
&= \frac{p-pq^{\nu+2} +q-q^{\nu+1} +  pq^{\nu+2} + p q^{\nu+1} + q^{\nu+2} + q^{\nu+1}}{p(1-q^2)}\\
&= \frac{p +q + p q^{\nu+1} + q^{\nu+2} }{p(1-q^2)}
=  \frac{(p+q)(1+q^{\nu+1})}{p(1-q^2)}.
\end{align*}
Now, let $\nu$ be odd:
\begin{align*}
\sum_{r=0}^\infty N_{p^r}(\a,m) p^{-rs}
= \sum_{r=0}^{(\nu-1)/2} p^{2r} p^{-2rs} + p^{2r} p^{-(2r+1)s} 
= \frac{p+q}{p} \sum_{r=0}^{(\nu-1)/2} q^{2r} 
= \frac{(p+q)(1-q^{\nu+1})}{p(1-q^2)}.
\end{align*}
Lastly, we consider the case $p \mid D$ and define
\[
\sigma := \kron{-D/p}{p}^\nu \kron{N(\a) m_0}{p}.
\]
We compute
\begin{align*}
\frac{1}{p} \sum_{r=0}^\infty N_{p^{r+1}}(\a,m) p^{-rs}
&= \frac{1}{p}\sum_{r=0}^{\nu-1} p^{r+1} p^{-rs} + \frac{1}{p}\sum_{r=\nu}^{\infty} \br{1+ \sigma } p^{r+1} p^{-rs}\\
&= \sum_{r=0}^{\nu-1} q^r + \sum_{r=\nu}^{\infty} \br{1+  \sigma } q^r
= \frac{1-q^\nu  +  \br{1+  \sigma } q^\nu }{1-q} 
= \frac{1 +  \sigma q^\nu }{1-q}.
\end{align*}
\end{proof}

\begin{corollary} \label{G-series-m0} 
For $m=0$ and $\Re(s)>2$ we obtain
\[
\sum_{b=1}^\infty G^b(\a,0,0)b^{-s} = \zeta(s-1) \frac{L(s-1,\chi_D)}{L(s,\chi_D)}.
\]
\end{corollary}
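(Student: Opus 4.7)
The plan is to follow the same route as for $m \ne 0$: apply the Euler product expansion \eqref{first-euler} and then compute each local factor using Proposition~\ref{rep-numbers}. The key observation is that for $m=0$ we always have $\gcd(0,p^\beta)=p^\beta$, so in the notation of Proposition~\ref{rep-numbers} the relevant case is $\nu=\beta$ for every exponent $\beta \ge 1$. In particular, the dependence on $\a$ disappears entirely (the Legendre symbol $\kron{N(\a)m_0}{p}$ never enters), which matches the fact that the right-hand side in the statement is independent of $\a$.

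Concretely, I would carry out a three-fold case analysis on each prime $p$, writing $q:=p^{1-s}$ throughout.

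\begin{enumerate}[(i)]
\item For $p \nmid D$ with $\chi_D(p)=1$, Proposition~\ref{rep-numbers} gives $N_{p^r}(\a,0)=(r+1)p^r-rp^{r-1}$, hence
\[
\sum_{r=0}^\infty N_{p^r}(\a,0)\,p^{-rs}
= \sum_{r=0}^\infty (r+1)q^r - \tfrac{1}{p}\sum_{r=0}^\infty r\,q^r
= \frac{1-q/p}{(1-q)^2} = \frac{p-q}{p(1-q)^2}.
\]
\item For $p \nmid D$ with $\chi_D(p)=-1$, splitting the sum by the parity of $r$ and using $N_{p^r}(\a,0)=p^r$ or $p^{r-1}$ accordingly gives
\[
\sum_{r=0}^\infty N_{p^r}(\a,0)\,p^{-rs} = \frac{1}{1-q^2} + \frac{q}{p(1-q^2)} = \frac{p+q}{p(1-q^2)}.
\]
\item For $p \mid D$ we have $N_{p^{r+1}}(\a,0) = p^{r+1}$, so
\[
\frac{1}{p}\sum_{r=0}^\infty N_{p^{r+1}}(\a,0)\,p^{-rs} = \sum_{r=0}^\infty q^r = \frac{1}{1-q}.
\]
\end{enumerate}

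Finally, I would match these three local factors against the corresponding Euler factors of $\zeta(s-1) L(s-1,\chi_D)/L(s,\chi_D)$, namely $(1-p^{-s})/(1-q)^2$, $(1+p^{-s})/(1-q^2)$, and $1/(1-q)$ respectively; each identifies with the computation above after substituting $p^{-s}=q/p$. Multiplying the factors over all $p$ (valid for $\Re(s)>2$, where $\zeta(s-1)$ converges absolutely) yields the claim.

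The computations themselves are routine geometric-series manipulations, so there is no real obstacle; alternatively, one can shortcut the work by observing that the formulas of Proposition~\ref{rep-prime-series} already specialize to the right answer when one formally passes $\nu \to \infty$ (so that $q^\nu \to 0$), which is exactly the $m=0$ case. The only care needed is to check the case distinction on $\chi_D(p)$ and the vanishing of the $\a$-dependence, both of which fall out transparently from the $\nu=\beta$ branch of Proposition~\ref{rep-numbers}.
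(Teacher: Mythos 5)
Your proposal is correct and follows essentially the same route as the paper: the paper likewise starts from the Euler product \eqref{first-euler} and obtains the local factors by setting $\nu=\infty$ in Proposition~\ref{rep-prime-series}, which is exactly the shortcut you mention at the end (your direct recomputation of the three local factors from Proposition~\ref{rep-numbers} just makes that degenerate specialization explicit, and all three factors check out). No gaps.
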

\begin{proof}
By equation~\eqref{first-euler} and Proposition~\ref{rep-prime-series} (set $\nu = \infty$) we obtain
\begin{align*}
\sum_{b=1}^\infty G^b(\a,0,0)b^{-s}
&= \prod_{p \mid D} \br{ \frac{1}{p} \sum_{r=0}^\infty N_{p^{r+1}}(\a,0) p^{-rs} } \prod_{p \nmid D} \br{ \sum_{r=0}^\infty N_{p^r}(\a,0) p^{-rs} }\\
&= \prod_{p \mid D} \frac{1  }{1-q} \cdot \prod_{p \nmid D} \br{\frac{p-\chi_D(p)q}{p(1-q)}  \frac{1}{1- \chi_D(p) q} }\\
&= \prod_{p} \frac{1}{1-p^{1-s}} \cdot  \frac{1-\chi_D(p)p^{-s}}{1- \chi_D(p) p^{1-s}}\\
&= \zeta(s-1) \frac{L(s-1,\chi_D)}{L(s,\chi_D)}.
\end{align*}
\end{proof}

\section{The generalized divisor sum}

In this section we define the generalized divisor sum and compute its Euler product.

\begin{definition} \label{div-sum-def} 
Let $m \in \Z \setm{0}$ and $\a \se \OK$ be coprime to $D$. We define
\[
\sigma(\a,m,s) = |m|^{(1-s)/2} \sum_{d \mid m} d^s \prod_{p \mid D} (\chi_{D(p)}(d) + \chi_{D(p)}(N(\a)m/d)).
\]
The product ranges over all prime divisors $p$ of $D$ and $D(p) \in \set{\pm p}$ such that $D(p)$ is a discriminant, i.e. $D(p) \equiv 1 \pmod 4$.
Now for arbitrary $\b \in \IK$ we define
$\sigma(\b,m,s) := \sigma(\a,m,s)$
with $\a \se \OK$ coprime to $D$ taken from the same genus as $\b$.
\end{definition}
\begin{remark}
The divisor sum is actually well defined and depends only on the genus of the input ideal because for $\a,\b \se \OK$ of the same genus both coprime to $D$ we have $\chi_{D(p)}(N(\a))=\chi_{D(p)}(N(\b))$ for all primes $p \mid D$.

Since the divisor sum $\sigma(\a,m,s)$ depends only on the genus of $\a$, we can assume for the rest of this section $\a \se \OK$ and that $\a$ is coprime to $D$.
\end{remark}

\begin{lemma} \label{sigma-functional} 
The divisor sum $\sigma(\a,m,s)$ satisfies the functional equation
\[
\sigma(\a,m,s) = \sigma(\a,m,-s).
\]
\end{lemma}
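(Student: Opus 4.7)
The plan is to prove the functional equation by substituting $d \mapsto |m|/d$ in the divisor sum defining $\sigma(\a,m,-s)$ and reducing to the global identity $\chi_D(N(\a))=1$.

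Starting from
\[
\sigma(\a,m,-s) = |m|^{(1+s)/2}\sum_{d\mid m} d^{-s} \prod_{p\mid D}\br{\chi_{D(p)}(d) + \chi_{D(p)}(N(\a)m/d)},
\]
I would apply the involution $d \mapsto |m|/d$ on the positive divisors of $|m|$. The factor $(|m|/d)^{-s}=|m|^{-s}d^s$ combines with $|m|^{(1+s)/2}$ to give exactly $|m|^{(1-s)/2}d^s$, matching the prefactor of $\sigma(\a,m,s)$. The product transforms into $\prod_{p\mid D}\br{\chi_{D(p)}(|m|/d) + \chi_{D(p)}(\sgn(m)N(\a)d)}$, and when $m<0$ one pulls out an overall factor $\prod_{p\mid D}\chi_{D(p)}(-1)=\chi_D(-1)=1$ (valid because $D>0$). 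In both cases it then remains to show
\[
\sum_{d\mid |m|} d^s \prod_{p\mid D}\br{\chi_{D(p)}(d) + \chi_{D(p)}(N(\a)|m|/d)} = \sum_{d\mid |m|} d^s \prod_{p\mid D}\br{\chi_{D(p)}(|m|/d) + \chi_{D(p)}(N(\a)d)}.
\]

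Next I would expand both products via $\prod_{p\mid D}(A_p+B_p)=\sum_{S\se P}\prod_{p\in S}A_p\prod_{p\notin S}B_p$, with $P:=\set{p\mid D}$ and $\chi_S := \prod_{p\in S}\chi_{D(p)}$. A short computation (using multiplicativity of the Kronecker symbol, and, for the right-hand side, the relabeling $S\leftrightarrow P\setminus S$) gives
\begin{align*}
\mathrm{LHS} &= \sum_{S\se P} \chi_{P\setminus S}(N(\a))\cdot I_S(s),\\
\mathrm{RHS} &= \sum_{S\se P} \chi_S(N(\a))\cdot I_S(s),
\end{align*}
where $I_S(s) := \sum_{d\mid |m|} d^s \chi_S(d)\chi_{P\setminus S}(|m|/d)$.

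The crux is therefore to establish $\chi_S(N(\a))=\chi_{P\setminus S}(N(\a))$ for every $S\se P$. Since $\chi_S\chi_{P\setminus S}=\chi_D$ and the values lie in $\set{\pm 1}$ (as $N(\a)$ is coprime to $D$), this is equivalent to $\chi_D(N(\a))=1$. By multiplicativity of $\chi_D$ and of $N$, it suffices to check it on prime ideals $\p\in\IK$ coprime to $D$: if $p$ splits then $N(\p)=p$ with $\chi_D(p)=1$, and if $p$ is inert then $N(\p)=p^2$ with $\chi_D(p^2)=1$; ramified primes are excluded by the coprimality assumption.

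The main obstacle, in my view, is recognizing that the naive factor-wise identity $\chi_{D(p)}(d)+\chi_{D(p)}(N(\a)m/d)=\chi_{D(p)}(m/d)+\chi_{D(p)}(N(\a)d)$ is in general \emph{false}; the functional equation emerges only after expanding the product over $p\mid D$ and pairing each subset $S$ with its complement, at which point the global identity $\chi_D(N(\a))=1$ supplies the required cancellation.
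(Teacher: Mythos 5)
Your proof is correct, and it opens exactly as the paper's does: apply the involution $d \mapsto |m|/d$ and reduce everything to $\chi_D(N(\a)\sgn(m))=1$. Where you diverge is in how the product over $p \mid D$ is handled. You expand it into $2^{\#\{p \mid D\}}$ terms indexed by subsets $S$, pair each $S$ with its complement, and invoke $\chi_S(N')=\chi_{P\setminus S}(N')$. The paper avoids the expansion entirely by using a \emph{corrected} factor-wise identity: for each individual $p \mid D$ one has
\[
\chi_{D(p)}(|m|/d) + \chi_{D(p)}(N(\a)\sgn(m)\,d) = \chi_{D(p)}(N(\a)\sgn(m))\br{\chi_{D(p)}(N(\a)m/d) + \chi_{D(p)}(d)},
\]
which holds because $\chi_{D(p)}(N(\a)\sgn(m))^2=1$ for $\a$ coprime to $D$; the product of the extracted multipliers is then $\chi_D(N(\a)\sgn(m))=1$. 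So your closing remark that the identity ``emerges only after expanding the product'' is an overstatement: the naive factor-wise identity you exhibit (with multiplier $1$) is indeed false in general, but the version with the multiplier $\chi_{D(p)}(N(\a)\sgn(m))$ is true and yields a shorter argument. Your route costs a subset expansion and a slightly informal treatment of the sign of $m$ (the cleanest bookkeeping is to carry $N':=\sgn(m)N(\a)$ through and use $\chi_D(N')=1$ once, rather than stripping off $\chi_D(-1)$ separately), but it buys one genuine addition: an actual proof that $\chi_D(N(\a))=1$ via splitting behaviour of primes, a fact the paper uses without comment.
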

\begin{proof}
We compute
\begin{align*}
\sigma(\a,m,-s)
&= |m|^{(1+s)/2} \sum_{d \mid m} d^{-s} \prod_{p \mid D} (\chi_{D(p)}(d) + \chi_{D(p)}(N(\a)m/d))\\
&= |m|^{(1+s)/2} \sum_{d \mid m} \br{\frac{|m|}{d}}^{-s} \prod_{p \mid D} (\chi_{D(p)}(|m|/d) + \chi_{D(p)}(N(\a)d \sgn(m)))\\
&= |m|^{(1-s)/2} \sum_{d \mid m} d^{s}  \prod_{p \mid D}  \chi_{D(p)}(N(\a)\sgn(m)) (\chi_{D(p)}( N(\a)m/d) + \chi_{D(p)}(d))\\
&= \sigma(\a,m,s) \prod_{p \mid D}  \chi_{D(p)}(N(\a)\sgn(m)).
\end{align*}
The claim follows with
\[
 \prod_{p \mid D} \chi_{D(p)}(N(\a)\sgn(m))= \chi_D(N(\a)) \chi_D(\sgn(m)) = 1.
\]
Note that $\chi_D(\sgn(m))=1$ is true because $D$ is positive (even though $D(p)$ might be negative for some $p \mid D$).
\end{proof}

\begin{lemma}
Let $m \in \Z \setm{0}$ and $\a \se \OK$ be coprime to $D$.
Then $s \mapsto \sigma(\a,m,s)$ is the zero function if and only if $\chi_{D(p)}(N(\a)m)=-1$ for a prime $p \mid D$.
\end{lemma}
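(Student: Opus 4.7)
The plan is to strip off the nowhere-vanishing prefactor $|m|^{(1-s)/2}$ and view the remainder as a Dirichlet polynomial
\[
P(s) = \sum_{d \mid m} c_d \, d^s, \qquad c_d := \prod_{p \mid D} \br{\chi_{D(p)}(d) + \chi_{D(p)}(N(\a)m/d)}.
\]
Since the functions $s \mapsto d^s$ for distinct positive integers $d$ are linearly independent over $\C$, the identity $\sigma(\a,m,\cdot) \equiv 0$ is equivalent to $c_d = 0$ for every $d \mid m$. Both directions then reduce to a local analysis, at each prime $p \mid D$, of the factor $\chi_{D(p)}(d) + \chi_{D(p)}(N(\a)m/d)$.

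For the ``if'' direction, suppose $\chi_{D(p)}(N(\a)m) = -1$ for some $p \mid D$. The first observation is that this forces $p \nmid N(\a)m$, and combined with $p \nmid N(\a)$ (which holds because $\a$ is coprime to $D$) yields $p \nmid m$. Hence for every divisor $d \mid m$, both $d$ and $N(\a)m/d$ are coprime to $p$, so the two values of $\chi_{D(p)}$ lie in $\setpm$. Multiplicativity of the Kronecker symbol then gives
\[
\chi_{D(p)}(d)\, \chi_{D(p)}(N(\a)m/d) = \chi_{D(p)}(N(\a)m) = -1,
\]
so the two terms have opposite signs and their sum vanishes. This kills the $p$-factor in every $c_d$, so $P \equiv 0$ and hence $\sigma(\a,m,\cdot) \equiv 0$.

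For the converse, assume $\chi_{D(p)}(N(\a)m) \ne -1$ for all $p \mid D$. I would focus on the single coefficient $c_1 = \prod_{p \mid D}(1 + \chi_{D(p)}(N(\a)m))$: by hypothesis each factor is either $1$ (when the character vanishes) or $2$ (when it equals $+1$), so $c_1$ is a positive integer. The linear independence invoked in the first paragraph then gives $P \not\equiv 0$, hence $\sigma(\a,m,\cdot) \not\equiv 0$. The only bit of book-keeping that requires care is ruling out the zero case of the Kronecker symbol in the forward direction; but this resolves automatically, since $\chi_{D(p)}(N(\a)m) = -1$ already implies $p \nmid m$. No deeper obstacle arises.
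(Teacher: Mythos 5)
Your proof is correct and follows essentially the same route as the paper: both reduce via linear independence of the functions $s \mapsto d^s$ to the vanishing of all coefficients, use the coefficient at $d=1$ for one direction, and use the coprimality $p \nmid m$ (forced by $\chi_{D(p)}(N(\a)m)=-1$) together with multiplicativity of the Kronecker symbol for the other. The only cosmetic difference is that you phrase the converse contrapositively and note explicitly that each factor of $c_1$ is $1$ or $2$; the substance is identical.
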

\begin{proof}
Since the set of functions $\set{ d^s : d \in \N }$ is linearly independent, the function $s \mapsto \sigma(\a,m,s)$ is the zero function if and only if
\[
\prod_{p \mid D} (\chi_{D(p)}(d) + \chi_{D(p)}(N(\a)m/d)) = 0
\]
for all $d \mid m$. For $d=1$ the product has the form
\[
\prod_{p \mid D} (1 + \chi_{D(p)}(N(\a)m)).
\]
This product is zero if and only if $\chi_{D(p)}(N(\a)m)=-1$ for a prime $p \mid D$. This proves the first direction.

Now let $p \mid D$ with $\chi_{D(p)}(N(\a)m)=-1$. This implies that $p \nmid m$. Therefore, $p$ is coprime to all $d \mid m$ and we have
\[
\chi_{D(p)}(d) + \chi_{D(p)}(N(\a)m/d) = \chi_{D(p)}(d) (1 + \chi_{D(p)}(N(\a)m)) = \chi_{D(p)}(d) \cdot 0 =0
\]
which proves the other direction.
\end{proof}

\begin{lemma} \label{almost-euler-sigma} 
For $m \ne 0$ we have the almost Euler product expansion
\begin{align*}
\sum_{d \mid m} d^s &\prod_{p \mid D} (\chi_{D(p)}(d) + \chi_{D(p)}(N(\a)m/d))\\
&= \sum_{D_1D_2=D} \chi_{D_1}(m_{D_2}) \chi_{D_2}(N(\a) m_0 m_{D_1}) m_{D_2}^{s} \prod_{p \nmid D} \frac{1- (\chi_D(p) p^s )^{\nu_p(m)+1} }{ 1- \chi_D(p) p^s}.
\end{align*}
Here, the sum in the first line sums only over the positive divisors of $m$ (even if $m$ is negative) and the sum in the second line sums over all decompositions $D_1D_2=D$ into two discriminants. For example for $D=21$ we have the four decompositions $21= 1 \cdot 21 = (-3) \cdot (-7) = (-7) \cdot (-3) = 21 \cdot 1$. A given decomposition $D_1D_2=D$ induces a decomposition $m=m_0 m_{D_1} m_{D_2}$ where
\[
m_{D_i} := \prod_{p \mid D_i} p^{\nu_p(m)}.
\]
\end{lemma}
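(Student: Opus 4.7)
The plan is to expand the product over primes $p \mid D$, match each resulting summand with a decomposition $D = D_1 D_2$ into prime discriminants, and then evaluate the inner divisor sum by showing that for each such decomposition only one ``type'' of divisor contributes nontrivially.

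First I would expand
\[
\prod_{p \mid D}(\chi_{D(p)}(d) + \chi_{D(p)}(N(\a)m/d))
\]
as a sum over subsets $S \subseteq \{p : p \mid D\}$, picking $\chi_{D(p)}(d)$ for $p \in S$ and $\chi_{D(p)}(N(\a)m/d)$ for $p \notin S$. Using multiplicativity of the Kronecker symbol across coprime moduli, each subset corresponds bijectively to a decomposition $D_1 D_2 = D$ via $D_1 := \prod_{p \in S} D(p)$ and $D_2 := \prod_{p \notin S} D(p)$. This uses the standard factorisation $D = \prod_{p \mid D} D(p)$, which is valid because the positivity of $D$ together with $D \equiv 1 \pmod 4$ forces an even number of prime factors $p \equiv 3 \pmod 4$. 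After this expansion, the left-hand side becomes $\sum_{D_1 D_2 = D} \sum_{d \mid m} d^s \chi_{D_1}(d) \chi_{D_2}(N(\a) m / d)$.

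Next, for a fixed decomposition I would decompose each positive divisor $d \mid m$ uniquely as $d = d_0 d_1 d_2$ with $d_0 \mid m_0$, $d_i \mid m_{D_i}$. Since $\chi_{D_i}$ is supported on integers coprime to $D_i$, the factor $\chi_{D_1}(d)$ forces $d_1 = 1$, and $\chi_{D_2}(N(\a) m/d)$ forces $m_{D_2}/d_2 = 1$, i.e.\ $d_2 = m_{D_2}$. Substituting $d = d_0 m_{D_2}$, pulling the resulting constants out front, and using $\chi_{D_1}\chi_{D_2} = \chi_{D_1 D_2} = \chi_D$ on the remaining factor $\chi_{D_i}(d_0)$ (together with $\chi_{D_2}(d_0)^2 = 1$ to move $d_0$ from the denominator to the numerator), the inner sum collapses to
\[
m_{D_2}^s \, \chi_{D_1}(m_{D_2}) \, \chi_{D_2}(N(\a) m_0 m_{D_1}) \sum_{\substack{d_0 \mid m_0 \\ d_0 > 0}} d_0^s \chi_D(d_0).
\]

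Finally, since $m_0$ is supported on primes $p \nmid D$, the remaining sum factors as the desired geometric Euler product $\prod_{p \nmid D} (1 - (\chi_D(p) p^s)^{\nu_p(m)+1})/(1 - \chi_D(p) p^s)$; summing over all decompositions $D_1 D_2 = D$ then yields the claimed identity. The main bookkeeping obstacle is handling the sign of $m$: when $m < 0$ (equivalently $m_0 < 0$), one has to check that the identity $\chi_{D_2}(m_0/d_0) = \chi_{D_2}(m_0)\chi_{D_2}(d_0)$ remains valid. It does, because $d_0 > 0$ is coprime to $D_2$ and the Kronecker symbol is multiplicative on signed integers, so any sign contribution is absorbed cleanly into the $\chi_{D_2}(m_0)$ factor appearing in the stated formula.
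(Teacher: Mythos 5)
Your proof is correct and takes essentially the same route as the paper: both expand the product over $p \mid D$ into a sum over discriminant decompositions $D = D_1 D_2$ (using $D = \prod_{p\mid D} D(p)$) and then evaluate the twisted divisor sum $\sum_{d \mid m} d^s \chi_{D_1}(d)\chi_{D_2}(N(\a)m/d)$. The only cosmetic difference is that the paper evaluates this inner sum via its multiplicativity in $m$, computing a local factor at each prime (and at $m=-1$ for the sign), whereas you identify globally that only the divisors $d = d_0 m_{D_2}$ with $d_0 \mid m_0$ contribute; the two computations are the same, and your handling of the sign of $m_0$ via $\chi_{D_2}(m_0)$ matches the paper's $\chi_{D_2}(-1)=\sgn(D_2)$ term.
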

\begin{proof}
By expansion of the product we obtain
\begin{align*}
\sum_{d \mid m} d^s \prod_{p \mid D} (\chi_{D(p)}(d) + \chi_{D(p)}(N(\a)m/d))
=  \sum_{D_1D_2=D} \chi_{D_2}(N(\a)) \sum_{d \mid m} d^s \chi_{D_1}(d) \chi_{D_2}(m/d).
\end{align*}
The advantage of this representation is that the latter sum
\[
\sum_{d \mid m} d^s \chi_{D_1}(d) \chi_{D_2}(m/d)
\]
is multiplicative in $m$. Therefore, we only have to evaluate it for each prime power dividing $m$ and $m=-1$ in case the original $m$ is negative.
We obtain
\[
\sum_{d \mid p^\nu} d^s \chi_{D_1}(d)\chi_{D_2}(p^\nu/d) = 
\begin{cases}
\chi_{D_2}(p)^\nu \frac{1- (\chi_D(p) p^s )^{\nu+1} }{ 1- \chi_D(p) p^s},&\quad p \nmid D,\\
\chi_{D_2}(p)^\nu,&\quad p \mid D_1,\\
\chi_{D_1}(p)^\nu p^{\nu s},&\quad p \mid D_2,
\end{cases}
\]
and for $m=-1$
\[
\sum_{d \mid m} d^s \chi_{D_1}(d) \chi_{D_2}(m/d) = \chi_{D_2}(-1) = \sgn(D_2).
\]
We conclude
\begin{align*}
\sum_{d \mid m} d^s \prod_{p \mid D} (\chi_{D(p)}(d) + \chi_{D(p)}(N(\a)m/d))\\
= \sum_{D_1D_2=D} \chi_{D_2}(\sgn(m)N(\a)) \prod_{p \mid D_1} \chi_{D_2}(p)^{\nu_p(m)} \prod_{p \mid D_2} \chi_{D_1}(p)^{\nu_p(m)} p^{\nu_p(m) s}\\
\times \prod_{p \nmid D} \chi_{D_2}(p)^{\nu_p(m)} \frac{1- (\chi_D(p) p^s )^{\nu_p(m)+1} }{ 1- \chi_D(p) p^s}\\
= \sum_{D_1D_2=D} \chi_{D_1}(m_{D_2}) \chi_{D_2}(N(\a) m_0 m_{D_1}) m_{D_2}^{s} \prod_{p \nmid D} \frac{1- (\chi_D(p) p^s )^{\nu_p(m)+1} }{ 1- \chi_D(p) p^s}.
\end{align*}
\end{proof}
With Lemma~\ref{almost-euler-sigma} for a full Euler product decomposition of $\sigma(\a,m,s)$ only a product decomposition of
\[
\sum_{D_1D_2=D} \chi_{D_1}(m_{D_2}) \chi_{D_2}(N(\a) m_0 m_{D_1}) m_{D_2}^{s}
\]
into factors corresponding to the primes $p \mid D$ is missing. The next lemma points into that direction.

\begin{lemma} \label{disc-sign-product}
Let $D=D_1D_2$ be a discriminant decomposition of $D$. Then we have
\[
\prod_{p \mid D_2} \kron{-D/p}{p}^{\nu_p(m)} \kron{ N(\a) m/m_p}{p} = \chi_{D_1}(m_{D_2}) \chi_{D_2}(N(\a) m/m_{D_2}).
\]
Here, of course $m_p := p^{\nu_p(m)}$.
\end{lemma}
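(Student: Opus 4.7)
The plan is to expand both sides as products of Legendre symbols indexed by $p \mid D_2$, and then reconcile them via quadratic reciprocity. The main ingredients are: the factorization $D_2 = \prod_{p \mid D_2} D(p)$ with $D(p) = \delta_p p$, $\delta_p := (-1)^{(p-1)/2}$; the identification of $\chi_{D(p)}$ with the Legendre symbol $\kron{\cdot}{p}$ on units mod $p$, since $\chi_{D(p)}$ is the unique non-trivial real character of conductor $p$; and the normalized reciprocity $\kron{D(q)}{p} = \kron{p}{q}$ for distinct odd primes, which follows at once from $D(q) \equiv 1 \pmod 4$.

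First I would unfold the right-hand side. Using $\chi_{D_2} = \prod_{p \mid D_2} \chi_{D(p)}$, $m_{D_2} = \prod_{p \mid D_2} m_p$, and $\chi_{D(p)} = \kron{\cdot}{p}$, the right-hand side becomes
\[
\prod_{p \mid D_2} \kron{D_1}{p}^{\nu_p(m)} \kron{N(\a) m / m_{D_2}}{p}.
\]
Splitting $m_{D_2}/m_p = \prod_{q \mid D_2, q \ne p} q^{\nu_q(m)}$ out of the second Legendre symbol gives $\kron{N(\a)m/m_{D_2}}{p} = \kron{N(\a)m/m_p}{p}\kron{m_{D_2}/m_p}{p}$. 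The factor $\prod_p \kron{N(\a)m/m_p}{p}$ matches a piece of the left-hand side and cancels, and after absorbing $\kron{D_1}{p}^{\nu_p(m)}$ via $\kron{-D/p}{p} = \kron{D_1}{p}\kron{-D_2/p}{p}$ the claim is reduced to
\[
\prod_{p \mid D_2} \kron{-D_2/p}{p}^{\nu_p(m)} = \prod_{p \mid D_2}\prod_{\substack{q \mid D_2 \\ q \ne p}} \kron{q}{p}^{\nu_q(m)}.
\]

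To finish, I would evaluate the left-hand side of this reduced identity. Writing $D_2/p = \delta_p \prod_{q \mid D_2, q \ne p} D(q)$ and using the elementary observation $\kron{-\delta_p}{p} = 1$ (check the two cases $p \equiv 1, 3 \pmod 4$), reciprocity yields $\kron{-D_2/p}{p} = \prod_{q \ne p} \kron{p}{q}$. Therefore the left-hand side becomes $\prod_p \prod_{q \ne p} \kron{p}{q}^{\nu_p(m)}$, and renaming the dummy indices $(p,q) \leftrightarrow (q,p)$ in the double product turns this into $\prod_p \prod_{q \ne p} \kron{q}{p}^{\nu_q(m)}$, exactly the right-hand side. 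The only real subtlety is the bookkeeping of these symmetric double products; the genuine arithmetic input is just quadratic reciprocity, kept sign-free by the normalization $D(p) \equiv 1 \pmod 4$.
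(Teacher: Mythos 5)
Your proof is correct, and it takes a genuinely different route from the paper's. The paper first reduces to squarefree $m$ and then argues by induction on the number of prime divisors: the base cases $m=\pm 1$ and the step $m\mapsto mP$ are each settled by a case distinction on $P\bmod 4$, on $\sgn(D_2)$, and on whether $P\mid D_2$, with reciprocity invoked separately in each branch. You instead expand both sides into Legendre symbols via the prime discriminant factorization $D_2=\prod_{p\mid D_2}D(p)$, cancel the common factor $\prod_{p}\kron{N(\a)m/m_p}{p}$, and reduce everything to the relabelling identity $\prod_{p}\prod_{q\ne p}\kron{p}{q}^{\nu_p(m)}=\prod_{p}\prod_{q\ne p}\kron{q}{p}^{\nu_q(m)}$ over ordered pairs of distinct primes dividing $D_2$. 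All of the $\bmod\ 4$ bookkeeping is concentrated in the two one-line facts $\kron{-\delta_p}{p}=1$ and $\kron{D(q)}{p}=\kron{p}{q}$, and the sign of $m$ is absorbed automatically by $\kron{D(p)}{-1}=\kron{-1}{p}$, so you need neither the reduction to squarefree $m$ nor the induction. This buys a shorter and more structural argument; the paper's version is more pedestrian but only ever applies reciprocity to one new prime at a time. If you write yours up, make explicit that $D_2=\prod_{p\mid D_2}D(p)$ (both sides are $\pm\prod_{p\mid D_2}p$ and both are $\equiv 1\pmod 4$), since that equality underlies the factorizations $\chi_{D_2}=\prod_{p\mid D_2}\chi_{D(p)}$ and $-D_2/p=(-\delta_p)\prod_{q\ne p}D(q)$ on which the whole computation rests.
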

\begin{proof}
Both sides of the equation are immune to changes of $m$ by a square. Hence, we can restrict the proof to squarefree $m$.
We show the equality now by induction on the number of prime divisors of $m$. For $m=1$ the equality follows from
\[
\prod_{p \mid D_2} \kron{N(\a)}{p}
= \kron{N(\a)}{|D_2|}
= \underbrace{\kron{N(\a)}{\sgn(D_2)}}_{=1} \kron{N(\a)}{D_2}
= \chi_{D_2}(N(\a)).
\]
For $m=-1$ we have on the left the additional factor
\[
\prod_{p \mid D_2} \kron{-1}{p}
= \prod_{ \substack{p \mid D_2  \\ p \equiv 3 \ (4) }   } (-1) = \sgn(D_2)
\]
which matches the factor $\chi_{D_2}(-1)$ on the right.

Now let the claim be true for squarefree $m \in \Z$. We want to prove it for $m':= m P$ with $P \nmid m$ prime. We start with the case $P \nmid D_2$. Then the product on the left changes by the factor
\[
\prod_{p \mid D_2} \kron{P}{p} = \kron{P}{|D_2|}.
\]
On the right the product changes by the factor $\chi_{D_2}(P)$. Now we show that the two factors are identical.
For $D_2>0$ we have because of $D_2 \equiv 1 \pmod 4$
\[
\kron{P}{|D_2|} = \kron{P}{D_2} = \kron{D_2}{P} = \chi_{D_2}(P).
\]
In case $D_2<0$ we consider the subcase $P \equiv 1 \pmod 4$ first:
\[
\kron{P}{|D_2|} = \kron{-D_2}{P} = \kron{-1}{P} \kron{D_2}{P} = \chi_{D_2}(P).
\]
In case $P \equiv 3 \pmod 4$ we obtain
\[
\kron{P}{|D_2|} = -\kron{|D_2|}{P} = - \kron{-1}{P} \kron{D_2}{P} = \chi_{D_2}(P).
\]
That finishes the proof of the case $P \nmid D_2$ and we proceed with the case $P \mid D_2$. Here, the product on the left side changes by the factor
\[
\kron{-D/P}{P} \prod_{ \substack{p \mid D_2 \\ p \ne P } } \kron{P}{p}
= \kron{-D/P}{P} \kron{P}{|D_2|/P}.
\]
On the right we obtain the additional factor $\chi_{D_1}(P)$.
Thus, we have to show
\[
\kron{D_1}{P} = \kron{-D/P}{P} \kron{P}{|D_2|/P}.
\]
By canceling $(D_1/P)$ from the equation we are left to show
\[
\kron{-D_2/P}{P} =  \kron{P}{|D_2|/P}.
\]
In case $P \equiv 1 \pmod 4$ we have
\[
\kron{-D_2/P}{P}
= \kron{|D_2|/P}{P}
= \kron{P}{|D_2|/P}.
\]
Now let $P \equiv 3 \pmod 4$ and $D_2<0$. Then it holds
\[
\kron{-D_2/P}{P}
= \kron{|D_2|/P}{P}
= \kron{P}{|D_2|/P}.
\]
In the last case $P \equiv 3 \pmod 4$ and $D_2>0$ we have
\[
\kron{-D_2/P}{P}
= - \kron{|D_2|/P}{P}
= \kron{P}{|D_2|/P}.
\]
\end{proof}

\begin{corollary} \label{D-factor-euler} 
We have
\begin{align*}
\prod_{p \mid D}  \br{1 + \kron{-D/p}{p}^{\nu_p(m)} \kron{N(\a) m/m_p}{p} m_p^s }
=\sum_{D_1D_2=D} \chi_{D_1}(m_{D_2}) \chi_{D_2}(N(\a) m_0 m_{D_1}) m_{D_2}^{s}.
\end{align*}
\end{corollary}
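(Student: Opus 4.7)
The plan is to expand the product on the left-hand side by the distributive law, producing a sum indexed by subsets $S$ of the set of prime divisors of $D$; the subset $S$ records for which primes the factor contributes the non-trivial term $\kron{-D/p}{p}^{\nu_p(m)} \kron{N(\a)m/m_p}{p} m_p^s$ rather than the constant $1$. To match this against the right-hand side, I would then set up a bijection between such subsets $S$ and discriminant decompositions $D_1 D_2 = D$.

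The bijection is natural: given $S$, there is a unique sign making $D_2 := \pm\prod_{p \in S} p$ satisfy $D_2 \equiv 1 \pmod 4$, and analogously one defines $D_1$ from the complementary primes. The product $D_1 D_2$ then differs from $D$ only by a sign, and a short computation modulo $4$ shows that this sign equals $+1$ precisely because $D \equiv 1 \pmod 4$; thus $D_1 D_2 = D$. Under this identification, the primes dividing $D_2$ are exactly those in $S$, so $\prod_{p \in S} m_p^s = m_{D_2}^s$.

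What remains is to identify the product of Kronecker symbols over $p \in S$ with $\chi_{D_1}(m_{D_2}) \chi_{D_2}(N(\a) m_0 m_{D_1})$. But this is exactly the content of Lemma~\ref{disc-sign-product}, once one rewrites $m/m_{D_2} = m_0 m_{D_1}$ using the factorization $m = m_0 m_{D_1} m_{D_2}$. Summing the expanded product over $S$ then becomes summation over discriminant decompositions $D_1 D_2 = D$, which reproduces the right-hand side.

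The main obstacle I anticipate is the bookkeeping in the bijection between subsets and discriminant decompositions: one must carefully verify that the two forced sign corrections multiply to $+1$, so that $D_1 D_2$ equals $+D$ rather than $-D$. Once that is settled, the remainder of the argument is a direct invocation of Lemma~\ref{disc-sign-product} with no further computation required.
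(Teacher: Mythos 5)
Your proposal is correct and follows essentially the same route as the paper: expand the product, put the resulting summands in bijection with the discriminant decompositions $D = D_1D_2$, and invoke Lemma~\ref{disc-sign-product} to identify each summand. The only difference is that you spell out the sign bookkeeping in the bijection (which the paper leaves implicit), and your check that the two sign corrections cancel because $D \equiv 1 \pmod 4$ and $D$ is squarefree is exactly right.
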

\begin{proof}
By expanding the product on the left the summands are in bijection to the discriminant decompositions $D=D_1D_2$. For each decomposition the associated summand is given by
\[
\prod_{p \mid D_2}  \kron{-D/p}{p}^{\nu_p(m)} \kron{N(\a) m/m_p}{p} m_p^s.
\]
Hence, Lemma \ref{disc-sign-product} proves the claim.
\end{proof}

\begin{corollary}
The divisor sum $\sigma(\a,m,s)$ has the Euler product expansion
\begin{align*}
|m|^{(1-s)/2} \prod_{p \mid D}  \br{1 + \kron{-D/p}{p}^{\nu_p(m)} \kron{N(\a) m/m_p}{p} m_p^s }\prod_{p \nmid D} \frac{1- (\chi_D(p) p^s )^{\nu_p(m)+1} }{ 1- \chi_D(p) p^s}.
\end{align*}
\end{corollary}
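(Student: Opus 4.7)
The plan is to read off the Euler product as an immediate consequence of the two preceding results, which have already done all the work.

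First I would recall the definition of $\sigma(\a,m,s)$ and write
\[
\sigma(\a,m,s) = |m|^{(1-s)/2} \sum_{d \mid m} d^s \prod_{p \mid D} (\chi_{D(p)}(d) + \chi_{D(p)}(N(\a)m/d)),
\]
so that the factor $|m|^{(1-s)/2}$ can simply be carried along and the real task is to find an Euler product for the inner divisor sum. Lemma~\ref{almost-euler-sigma} rewrites that inner sum as
\[
\sum_{D_1D_2=D} \chi_{D_1}(m_{D_2}) \chi_{D_2}(N(\a) m_0 m_{D_1}) m_{D_2}^{s} \prod_{p \nmid D} \frac{1- (\chi_D(p) p^s )^{\nu_p(m)+1} }{ 1- \chi_D(p) p^s},
\]
which is already a genuine Euler product over the primes $p \nmid D$, times a single sum over discriminant decompositions $D=D_1D_2$ that collects the contribution of the primes dividing $D$.

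Next I would invoke Corollary~\ref{D-factor-euler}, which is precisely the statement that this remaining sum over discriminant decompositions factors as the Euler product
\[
\sum_{D_1D_2=D} \chi_{D_1}(m_{D_2}) \chi_{D_2}(N(\a) m_0 m_{D_1}) m_{D_2}^{s} = \prod_{p \mid D} \br{1 + \kron{-D/p}{p}^{\nu_p(m)} \kron{N(\a) m/m_p}{p} m_p^s }
\]
over the primes dividing $D$. Substituting this identity back into the expression from Lemma~\ref{almost-euler-sigma} and restoring the prefactor $|m|^{(1-s)/2}$ yields exactly the claimed Euler product for $\sigma(\a,m,s)$.

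Since both non-trivial steps are already established, there is no real obstacle here: the proof consists of concatenating Lemma~\ref{almost-euler-sigma} with Corollary~\ref{D-factor-euler}. The only thing one might briefly remark on is that the hypothesis $\a \se \OK$ coprime to $D$ (assumed after Definition~\ref{div-sum-def}) ensures all Legendre symbols $\kron{N(\a)\,\cdot}{p}$ are well defined in the sense of Remark~\ref{coprime-legendre}, so that the right-hand side makes sense.
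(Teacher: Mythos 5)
Your proof is correct and is exactly the paper's argument: the paper's own proof reads ``Combine Lemma~\ref{almost-euler-sigma} and Corollary~\ref{D-factor-euler},'' which is precisely the concatenation you spell out. Nothing to add.
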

\begin{proof}
Combine Lemma~\ref{almost-euler-sigma} and Corollary~\ref{D-factor-euler}.
\end{proof}

\section{Euler product comparison} 

\begin{theorem} \label{Gb-series} 
We have for $m \ne 0$ and $\Re(s)>2$
\[
\sum_{b=1}^\infty G^b(\a,m,0)b^{-s} =|m|^{-s/2}  \frac{\zeta(s-1)}{L(s,\chi_D)} \sigma(\a,m,1-s).
\]
\end{theorem}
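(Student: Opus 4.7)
The plan is to prove the identity by matching Euler factors on both sides, prime by prime. By Proposition~\ref{rep-genus} the left-hand side depends only on the genus of $\a$, and by Definition~\ref{div-sum-def} so does the right-hand side, so we may assume $\a \se \OK$ is coprime to $D$. The left-hand side already admits the Euler product~\eqref{first-euler}, whose local factors are given explicitly by Proposition~\ref{rep-prime-series}. It therefore suffices to show that, prime by prime, these factors agree with the corresponding local factors of the right-hand side.

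For the right-hand side I would plug in the standard Euler products
$$\zeta(s-1) = \prod_p \frac{1}{1-p^{1-s}}, \qquad L(s,\chi_D)^{-1} = \prod_p \br{1-\chi_D(p)p^{-s}},$$
together with the Euler product for $\sigma(\a,m,s)$ from the final Corollary of Section~7. The prefactor $|m|^{-s/2}$ on the right-hand side exactly cancels the $|m|^{s/2} = |m|^{(1-(1-s))/2}$ coming out of $\sigma(\a,m,1-s)$, so the entire right-hand side becomes a pure product over primes. Writing $q := p^{1-s}$ and $\nu := \nu_p(m)$, for $p \nmid D$ the local factor reduces to
$$\frac{1-\chi_D(p)p^{-s}}{1-q} \cdot \frac{1-(\chi_D(p)q)^{\nu+1}}{1-\chi_D(p)q} = \frac{p-\chi_D(p)q}{p(1-q)} \cdot \frac{1-\chi_D(p)^{\nu+1}q^{\nu+1}}{1-\chi_D(p)q},$$
which is exactly the factor produced by Proposition~\ref{rep-prime-series}. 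For $p \mid D$ the $L$-factor is trivial; using $m_p^{1-s} = q^{\nu}$ and the fact that $m/m_p$ is the $p$-coprime part of $m$, denoted $m_0$ in the local notation of Proposition~\ref{rep-prime-series}, the right-hand side factor becomes
$$\frac{1}{1-q}\br{1 + \kron{-D/p}{p}^{\nu} \kron{N(\a) m_0}{p} q^{\nu}},$$
which again matches Proposition~\ref{rep-prime-series}.

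The main obstacle is notational bookkeeping rather than any new computation: the symbol $m_0$ is used in two different senses in the paper (the $p$-coprime part in Section~5 versus the $D$-coprime part in Section~7), and one must properly identify $m/m_p$ with the local $p$-coprime part, $m_p^{1-s}$ with $q^\nu$, and track the vanishing of $\chi_D(p)$ at ramified primes. Once this identification is carried out, the theorem follows directly from the two Euler product computations already in hand.
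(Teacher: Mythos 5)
Your proposal is correct and follows essentially the same route as the paper: both sides are written as Euler products (the left via equation~\eqref{first-euler} and Proposition~\ref{rep-prime-series}, the right via the Euler products of $\zeta(s-1)$, $L(s,\chi_D)^{-1}$ and the divisor sum), and the local factors are matched prime by prime, with the $|m|^{-s/2}$ cancelling the $|m|^{s/2}$ from $\sigma(\a,m,1-s)$. Your explicit remarks on reducing to $\a$ coprime to $D$ and on the two uses of the symbol $m_0$ are accurate bookkeeping points that the paper leaves implicit, but they do not change the argument.
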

\begin{proof}
We write down both sides as Euler product and see that the factors coincide. We use again the abbreviation $q:=p^{1-s}$ and obtain
\[
\frac{\zeta(s-1)}{L(s,\chi_D)}
= \prod_p \frac{1-\chi_D(p)p^{-s}}{1-p^{-(s-1)}}
= \prod_p \frac{p-\chi_D(p)q}{p(1-q)}.
\]
Put together with the divisor sum we get for the right side
\begin{align*}
\prod_p \frac{p-\chi_D(p)q}{p(1-q)}\prod_{p \mid D}  \br{1 + \kron{-D/p}{p}^{\nu_p(m)} \kron{N(\a) m/m_p}{p} m_p^{1-s} }
\prod_{p \nmid D} \frac{1- (\chi_D(p) q )^{\nu_p(m)+1} }{ 1- \chi_D(p) q}.
\end{align*}
By equation~\eqref{first-euler} and Proposition \ref{rep-prime-series} the left side equals
\begin{align*}
&\prod_{p \mid D} \br{ \frac{1}{p} \sum_{r=0}^\infty N_{p^{r+1}}(\a,m) p^{-rs} } \prod_{p \nmid D} \br{ \sum_{r=0}^\infty N_{p^r}(\a,m) p^{-rs} }\\
= &\prod_{p \mid D} \br{  \frac{1 + \kron{-D/p}{p}^{\nu_p(m)} \kron{N(\a) m/m_p}{p} q^{\nu_p(m)} }{1-q} }
 \prod_{p \nmid D} \br{\frac{p-\chi_D(p)q}{p(1-q)}  \frac{1- \chi_D(p)^{\nu_p(m)+1} q^{\nu_p(m)+1}}{1- \chi_D(p) q} }.
\end{align*}
As predicted, the factors coincide.
\end{proof}
\begin{remark}
We can even interpret formula~\eqref{Gseries-mne0} for $m=0$ to obtain formula~\eqref{Gseries-m0} from Corollary~\ref{G-series-m0} by
\begin{align*}
|m|^{-s/2}  \sigma(\a,m,1-s)
&= |m|^{-s/2}  \cdot |m|^{s/2} \sum_{d \mid m} d^{1-s} \prod_{p \mid D} (\chi_{D(p)}(d) + \chi_{D(p)}(N(\a)m/d))\\
&= \sum_{d = 1}^\infty d^{1-s} \chi_D(d) = L(s-1,\chi_D).
\end{align*}
\end{remark}


\end{document}